\newtheorem{theorem}{Theorem}[section]
\newtheorem{lemma}[theorem]{Lemma}
\newtheorem{proposition}[theorem]{Proposition}
\newtheorem{corollary}[theorem]{Corollary}
\newenvironment{proof}{    
  \noindent
  \textbf{Proof.}}{
  \hfill $\Box$
  \vspace{3mm}
}
\numberwithin{equation}{section}
\newcommand{\N}{\mathbb{N}} 
\newcommand{\R}{\mathbb{R}} 
\newcommand{\C}{\mathbb{C}} 
\newcommand{\D}{\mathbb{D}} 
\newcommand{\cL}{{\mathcal L}}
\newcommand{\g}{\gamma}
\newcommand{\wh}{\widehat}
\newcommand{\su}{\subseteq}
\newcommand{\sC}{\mathsf{C}}
\newcommand{\ov}{\overline}
\DeclareMathOperator*{\ind}{ind}    
\begin{document}

\title{Ces\`aro operators on the space of analytic functions with logarithmic
growth}

\author{Jos\'{e} Bonet  }

\date{}

\maketitle

\begin{abstract}
Continuity, compactness, the spectrum and ergodic properties of  Ces\`aro operators are investigated when they act on the space $VH(\D)$ of analytic functions with logarithmic growth on the open unit disc $\D$ of the
complex plane. The space $VH(\D)$ is a countable inductive limit of weighted Banach spaces of analytic functions with compact linking maps. It was introduced and studied by Taskinen and also by Jasiczak.

\end{abstract}

\renewcommand{\thefootnote}{}
\footnotetext{\emph{2020 Mathematics Subject Classification.}
Primary: 47B91, secondary: 46E10, 46E15, 47A10, 47A16, 47A35, 47B38.}%
\footnotetext{\emph{Key words and phrases.} Weighted spaces of analytic functions, Ces\`aro operator, logarithmic growth, spectrum, mean ergodic operator}
\footnotetext{\emph{Article accepted for publication in Annales Polonici Mathematici} }


\section{Introduction.}
The aim of this note is to study Ces\`aro operators when they act on the space $VH(\D)$ of analytic functions with logarithmic growth on the open unit disc $\D$ of the complex plane. This space was introduced and investigated by Taskinen in \cite{T}. It was studied also by Jasiczak \cite{J}. This space is a countable union of weighted Banach spaces of analytic functions whose topology can be described by weighted sup-seminorms.  They proved that the Bergman projection is continuous from the corresponding space of measurable functions into this space. This fact permitted Taskinen and the author to study Toeplitz operators on this space in \cite{BT}.

We denote by $H(\D)$ the Fr\'echet space of all analytic functions on $\D$
endowed with the topology $\tau_{co}$ of uniform convergence on the compact subsets of $\D$.

The classical Ces\`aro operator $\sC$ is given by
\begin{equation}\label{eq.op-C}
f\mapsto \sC(f)\colon z\mapsto  \frac{1}{z}\int_0^z \frac{f(\zeta)}{1-\zeta} d \zeta, \ \ \ z \in \D\setminus\{0\}, \quad {\rm and }\quad \sC(f)(0)=f(0),
\end{equation}
for $f\in H(\D)$. It is a Fr\'echet space isomorphism of $H(\D)$ onto itself, see \cite{Pe}. In terms of the Taylor coefficients
$\wh{f}(n):= \frac{f^{(n)}(0)}{n!}$, for $ n \in \N_0,$ of functions $f(z)= \sum_{n=0}^\infty \wh{f}(n) z^{n} \in H(\D)$ one has the description
$$
\sC (f)(z) = \sum_{n=0}^{\infty} \Big( \frac{1}{n+1} \sum_{k=0}^{n} \wh{f}(k) \Big) z^n, \ \ \ \ z \in \D.
$$

It is known that there are many classical Banach spaces $X$ of analytic functions on $\D$ such that the Ces\`aro operator $\sC$ acts continuously from $X$ into itself; for instance, the Hardy spaces $H^p(\D), 1 \le p<\infty,$ the Bergman  and the  Dirichlet spaces, etc.\ See for example \cite{ABR1,APe,Pe} and the references therein. On the other hand, $\sC $  fails to act in the space $H^\infty (\D)$  of bounded analytic functions, since $\sC(\mathbf{1})(z)= (1/z) \log(1/(1-z))$, for $ z \in \D$.

We describe the space $VH(\D)$  on which the Ces\`aro operators are investigated in this paper.  A {\it weight} $v$ on $\D$  is a continuous function  $v: [0, 1) \to (0,  \infty )$, which is non-increasing on $[0,1)$ and satisfies $\lim_{r \rightarrow 1} v(r)=0$. We extend $v$ to $\D$ radially by $v(z):= v(|z|)$. For such a weight $v$, we  define the following {\it weighted Banach spaces of
analytic functions} on $\D$ $$ H_v^\infty:= \{ f \in H(\D):\ \|f\|_v
:= \sup_{z \in \D} v(z)|f(z)| < \infty \}, $$ $$ H_v^0:= \{ f \in
H(\D):\ v|f| \ {\rm vanishes \ at \ infinity} \ {\rm on }\ \D \},
$$
endowed with the norm $\|.\|_v$. Recall that a  function $g$ vanishes at infinity on $\D$ if for
every $\varepsilon>0$ there is a compact subset $K$ of $\D$ such
that $|g(z)| < \varepsilon$ if $z \notin K$.

For an analytic  function $f \in H(\{z \in \C ; |z| <R \})$ and $r <R$, we denote $M(f,r):= \max\{|f(z)| \ ; \ |z|=r\}$. Using the notation $O$ and $o$ of Landau, $f \in H_v^\infty$ if and only if $M(f,r)=O(1/v(r)), r \rightarrow 1,$ and $f \in H_v^0$ if and only if $M(f,r)=o(1/v(r)), r \rightarrow 1$. Polynomials are contained in $H^0_v$ and  the closure of the polynomials in $H_v^\infty$ coincides with $H_v^0$, see e.g.\ \cite{BBG}.

The examples of weights which are relevant in this paper are:

\noindent $(i)$ $w_{\gamma}(z)=(1-|z|)^{\gamma}, \ z \in \D,$ with $\gamma >0$, which are the so-called standard weights on the disc, for which $A^{-\g} = H^\infty_{w_{\g}}(\D)$ and $A^{-\g}_0 = H^0_{w_{\g}}(\D)$ are the Korenblum type Banach growth spaces; see Section 4.3 in \cite{HKZ}.

\noindent $(ii)$ $v(z)= 1$ if $|z| \leq 1-1/e$, and $v(z)= (-\log(1-|z|))^{-1}$ if $|z| \geq 1-1/e$. Observe that $0 < v(z) \leq v(0)=1$ for all $z \in \D$. The logarithmic weights are defined by $v_{\alpha}(z):= v(z)^\alpha, \alpha \geq 1$.

Banach spaces of the type mentioned above appear naturally in the
study of growth conditions of analytic functions and have been
considered in many papers. We refer to \cite{BBG,BBT,B} and the references therein. Lusky \cite{L2} obtained the isomorphic classification of these spaces. We refer the reader to \cite{HKZ,Z} for unexplained notation.
In what follows $\N$ stands for the natural numbers and  we set $\N_0 := \N \cup \{ 0 \}$.

The space $VH(\D)$ of analytic functions of logarithmic growth introduced by Taskinen in \cite{T} is defined as follows: Set $v_k(z)= v(z)^k, \ z \in \D, k \in \N,$ for $v(z)= 1$ if $|z| \leq 1-1/e$, and $v(z)= (-\log(1-r))^{-1}$ if $|z| \geq 1-1/e$. Since $v_{k+1}(z) \leq v_k(z)$ for all $z \in \D, k \in \N,$ the weighted Banach spaces satisfy $H^\infty_{v_k} \subset H^\infty_{v_{k+1}}$ with continuous inclusion. Then $VH(\D):= \bigcup_{k \in \N} H^\infty_{v_k}$ and it is endowed with the finest locally convex topology such that the inclusions $H^\infty_{v_k} \subset VH(\D)$ are continuous; that is $VH(\D)= {\rm ind}_k H^\infty_{v_k}$ is an (LB)-space, a (Hausdorff) countable inductive limit of the weighted Banach spaces $H^\infty_{v_k}, k \in \N$.

Moreover, $\lim_{|z|\rightarrow 1} \frac{v_{k+1}(z)}{v_{k}(z)}=0$ for each $k \in \N$. This implies that $H^\infty_{v_k} \subset H^0_{v_{k+1}}$ and that the inclusion is in fact compact. Consequently, the space $VH(\D)$ satisfies $VH(\D)=\bigcup_{k \in \N} H^0_{v_k}$, it is a (DFS)-space and its topology can be described by weighted sup-seminorms, see \cite{BMS}, \cite{J}, \cite{T}. In fact in \cite{T} and \cite{J} the space $VH(\D)$ is introduced with a family of weighted sup-seminorms. We refer the reader to \cite{Bi} and \cite{MV} for more information about (DFS)-spaces. Every bounded subset of $VH(\D)$ is relatively compact and it is contained and relatively compact in a step $H^\infty_{v_k}$. In particular, $VH(\D)$ is a Montel space.  More information about this space can be found in \cite{J}, \cite{T}. For example, this space is a topological algebra which is Schwartz but not nuclear.

The Banach space $H^\infty(\D)$ of bounded analytic functions on $\D$ is contained in $VH(\D)$. In fact $H^\infty(\D) \subset H^\infty_{v_1}$ with continuous inclusion. Usually functions $g \in H^\infty_{v_1}$ are called functions with logarithmic mean growth. Every analytic function $g$ in the Bloch space $\mathcal{B}$ has logarithmic mean growth, see e.g.\ page 106 in \cite{Z}. There are analytic functions $g \notin \mathcal{B}$ which have logarithmic mean growth  \cite{GGP}.

On the other hand, for each $\gamma > 0$ and $k \in \N$, there is $C_{\gamma,k} >0$ such that $w_{\gamma}(r)=(1-r)^{\gamma} \leq C_{\gamma,k} v_k(z)$ for each $z \in \D$. Therefore $VH(\D) \subset \bigcap_{\g > 0} A^{-\g} = \bigcap_{\g > 0} A^{-\g}_0$.

The fact that $\sC(\mathbf{1})(z)= (1/z) \log(1/(1-z))$ indicates that the space $VH(\D)$ is a natural frame to investigate the Ces\`aro operator on it. We show in Proposition \ref{continuity} that the Ces\`aro operator $\sC$ is continuous on $VH(\D)$, but it does not act continuously in any of its steps. Its spectrum is exhibited in our main result Theorem \ref{spectrumCesaro}. As a consequence we deduce that $\sC$ is not compact, not power bounded, not mean ergodic and not supercyclic in Corollary \ref{noncompactCesaro} and Proposition \ref{meanergodicCesaro}. The behaviour of the generalized Ces\`aro operator $C_t, \ 0 \leq t < 1,$ on $VH(\D)$ is investigated in section \ref{genCesoper}. In contrast with $\sC$, this operator is compact, power bounded and uniformly mean ergodic.

\section{Ces\`aro operator. }\label{Cesoper}

In this section we investigate the continuity, the spectrum and the ergodic properties of the Ces\`aro operator $\sC$ when it acts on the space $VH(\D)$ studied by  Jasiczak and Taskinen.

The following consequence of Grothendieck's factorization Theorem \cite[Theorem 24.33]{MV} is well-known.

\begin{lemma}\label{continuityLB}
Let $X={\rm ind}_k X_k$ be an (LB)-space. A linear operator $T: X \rightarrow X$ is continuous if and only if for each $k$ there is $m \geq k$ such that $T: X_k \rightarrow X_m$ is continuous.
\end{lemma}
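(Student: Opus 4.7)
The plan is to prove both implications, relying on the universal property of the inductive limit topology for the easy direction and on Grothendieck's factorization theorem (as cited) for the hard direction.

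For the sufficiency direction, I would assume that for each $k$ there is $m=m(k)\geq k$ such that $T\colon X_k\to X_m$ is continuous. Denoting by $\iota_j\colon X_j\hookrightarrow X$ the canonical continuous inclusions, the composition $\iota_m\circ T\colon X_k\to X$ is continuous for every $k$. By the very definition of the inductive limit topology on $X=\mathrm{ind}_k X_k$, a linear map with domain $X$ is continuous if and only if its restriction to each step $X_k$ is continuous, and this is exactly what we have just verified for $T\colon X\to X$.

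For the necessity direction, assume that $T\colon X\to X$ is continuous and fix $k\in\N$. Then $T\circ\iota_k\colon X_k\to X$ is continuous. Since $X_k$ is a Banach space it is, in particular, a Baire locally convex space, so Grothendieck's factorization theorem (Theorem 24.33 in \cite{MV}, quoted above the statement) applies to the map $T\circ\iota_k$ into the (LB)-space $X$: there exists some index $m$ such that $T(X_k)\subseteq X_m$ and $T\colon X_k\to X_m$ is continuous. Because the inclusions $X_j\hookrightarrow X_{j+1}$ are continuous, we may enlarge $m$ if necessary and assume $m\geq k$, yielding the required factorization.

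The only non-routine ingredient is the appeal to Grothendieck's factorization theorem in the necessity direction; everything else is a direct unwinding of the definition of the inductive limit topology. Since the paper explicitly cites this result, the proof reduces to a brief two-step argument and there is no substantive obstacle to overcome beyond correctly applying the cited theorem to the Banach step $X_k$.
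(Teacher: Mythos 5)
Your proof is correct and follows exactly the route the paper intends: the paper states this lemma without proof as a well-known consequence of Grothendieck's factorization theorem \cite[Theorem 24.33]{MV}, and your argument simply supplies the standard details (universal property of the inductive limit topology for sufficiency, factorization of $T|_{X_k}$ through some step for necessity). No gaps.
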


\begin{proposition}\label{continuity}
The Ces\`aro operator $\sC: VH(\D) \rightarrow VH(\D)$ is continuous, but for each $k \in \N$ it does not act continuously on $H^\infty_{v_k}$.
\end{proposition}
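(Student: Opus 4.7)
The plan is to handle the two halves separately, using Lemma \ref{continuityLB} for the positive part and an explicit test function for the negative part.

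For the first assertion, by Lemma \ref{continuityLB} it suffices to show that for every $k \in \N$ the Cesàro operator maps $H^\infty_{v_k}$ continuously into $H^\infty_{v_{k+1}}$; I would take $m = k+1$. The key representation is obtained by parametrising $\zeta = tz$ in \eqref{eq.op-C}, giving
\[
\sC(f)(z) = \int_0^1 \frac{f(tz)}{1-tz}\, dt, \qquad z \in \D \setminus \{0\}.
\]
For $f \in H^\infty_{v_k}$ and $|z|$ close to $1$, use $|f(tz)| \le \|f\|_{v_k}/v_k(tz) = \|f\|_{v_k}(-\log(1-t|z|))^k$ on the region where $t|z| \ge 1 - 1/e$, together with $|1 - tz| \ge 1 - t|z|$, and change variables $u = -\log(1-t|z|)$ to evaluate
\[
\int_0^1 \frac{(-\log(1-t|z|))^k}{1-t|z|}\, dt = \frac{(-\log(1-|z|))^{k+1}}{(k+1)|z|}.
\]
This yields $v_{k+1}(z)|\sC(f)(z)| \le C_k \|f\|_{v_k}$ for $|z|$ bounded away from $0$; for small $|z|$ one uses that $\sC$ is a topological isomorphism of $H(\D)$ (so pointwise bounds on a compact disc are dominated by $\|f\|_{v_k}$ via the inclusion $H^\infty_{v_k} \hookrightarrow H(\D)$). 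Combining the two ranges gives the desired norm inequality.

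For the second assertion I would exhibit a concrete $g_k \in H^\infty_{v_k}$ whose image under $\sC$ fails to lie in $H^\infty_{v_k}$; since $\sC$ does not even map $H^\infty_{v_k}$ into itself, it certainly does not act continuously. The natural candidate is
\[
g_k(z) := \Bigl(\log \frac{1}{1-z}\Bigr)^k, \qquad z \in \D.
\]
Using $|\log(1/(1-z))| \le -\log(1-|z|) + \pi$ (splitting modulus and argument of $1-z$) one checks that $v_k(z)|g_k(z)|$ is bounded on $\D$, so $g_k \in H^\infty_{v_k}$. The substitution $u = -\log(1-\zeta)$ inside the integral \eqref{eq.op-C} gives the explicit formula
\[
\sC(g_k)(z) = \frac{1}{(k+1)z}\Bigl(\log \frac{1}{1-z}\Bigr)^{k+1}.
\]
Restricting to $z = r \in (0,1)$ yields
\[
v_k(r)\,|\sC(g_k)(r)| = \frac{-\log(1-r)}{(k+1)\,r} \longrightarrow \infty \quad \text{as } r \to 1^-,
\]
so $\sC(g_k) \notin H^\infty_{v_k}$.

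The only mildly delicate step is verifying that $g_k \in H^\infty_{v_k}$ globally on $\D$ rather than just on a radius (since the weight is defined via the modulus while $g_k$ depends on the argument); this is handled by the elementary inequality on $|\log(1/(1-z))|$ mentioned above. Everything else is a direct substitution in the defining integral, and the test function $g_k$ is essentially forced by the identity $\sC(\mathbf{1})(z) = (1/z)\log(1/(1-z))$ iterated $k$ times.
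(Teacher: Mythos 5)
Your proposal is correct and follows essentially the same route as the paper: continuity is obtained by showing $\sC$ maps $H^\infty_{v_k}$ boundedly into $H^\infty_{v_{k+1}}$ (via the parametrization $\zeta = tz$ and $|1-tz|\geq 1-t|z|$) and then applying Lemma \ref{continuityLB}, while non-continuity on the steps uses the same test functions, since your $g_k=(\log\frac{1}{1-z})^k$ is $(\pm 1)$ times the paper's $(\log(1-z))^k$, with the identical closed-form expression for its image. The only cosmetic differences are that you derive the norm estimate directly with explicit constants where the paper combines a pointwise growth bound, the maximum modulus theorem and the closed graph theorem, and that your bound $|\log\frac{1}{1-z}|\leq -\log(1-|z|)+\pi$ is slightly cruder than the exact inequality $|\log(1-z)|\leq -\log(1-|z|)$ (immediate from the Taylor series) used in the paper; both work.
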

\begin{proof}
We begin with the following estimate for $f \in H(\D)$ and $0 < |z| < 1$:
$$
|\sC f(z)| = \big| \frac{1}{z} \int_0^z \frac{f(\zeta)}{1-\zeta} d\zeta \big| = \big| \int_0^1 \frac{f(tz)}{1-tz} dt \big| \leq
$$
$$
\leq M(f,|z|)  \int_0^1 \frac{1}{|1-tz|} dt  \leq M(f,|z|)  \int_0^1 \frac{1}{1-t|z|} dt  =  M(f,|z|) \frac{1}{|z|} \log\Big(\frac{1}{1-|z|}\Big).
$$
This implies, for each $0 < r < 1$,  $$M(\sC f,r) \leq M(f,r) \frac{1}{r} \log\Big(\frac{1}{1-r}\Big).$$
In particular, $\sC$ is continuous on $H(\D)$, as is known \cite{Pe}.

Now we show that $\sC(H^\infty_{v_k}) \subset H^\infty_{v_{k+1}}$ for each $k \in \N$. Fix $k \in \N$ and $f \in H^\infty_{v_k}$. There is $C>0$ such that $|f(z)| \leq C/v_k(z)$ for each $z \in \D$. If $|z| \geq 1-1/e$, then
$$
|\sC f(z)| \leq   M(f,|z|) \frac{1}{|z|} \log\Big(\frac{1}{1-|z|}\Big) \leq  \frac{C}{1-1/e} \frac{1}{v_{k+1}(z)}.
$$
For $|z| = 1-1/e$ we have $|\sC f(z)| \leq C/(1-1/e)$. By the maximum modulus theorem,
$$
\max_{|z| \leq 1-1/e} |\sC f(z)| \leq C/(1-1/e).
$$
Therefore,
$$
\sup_{z \in \D} v_{k+1}(z) |\sC f(z)| \leq C/(1-1/e),
$$
and $\sC f \in H^\infty_{v_{k+1}}$.

The continuity of $\sC$ on $H(\D)$ permits us to apply the closed graph theorem to conclude  that $\sC: H^\infty_{v_k} \rightarrow H^\infty_{v_{k+1}}$ is continuous. Lemma \ref{continuityLB} implies that $\sC: VH(\D) \rightarrow VH(\D)$ is continuous.

It remains to show that, for each $k \in \N$, the operator $\sC$ does not act continuously in each $H^\infty_{v_k}$. To see this, observe that the function $g(z):= \log(1-z), z \in \D,$ is analytic on $\D$ and $|g(z)| \leq -\log(1-|z|)$ for each $z \in \D$. This yields $g^k \in H^\infty_{v_k}$ for each $k \in \N$. Moreover, $$\sC(g^k)(z)= -\frac{1}{(k+1)z}g(z)^{k+1},$$ which is an analytic function which does not belong to $H^\infty_{v_k}$, as is easy to check.
\end{proof}

Given locally convex Haudorff spaces $X, Y$ we denote by $\cL(X,Y)$ the space of all continuous linear operators from $X$ into $Y$. If $X=Y$, then we simply write $\cL(X)$ for $\cL(X,X)$. A linear map $T\colon X\to Y$ is called \textit{compact} if there exists a neighbourhood $U$ of $0$ in $X$ such that $T(U)$ is a relatively compact set in $Y$. It is clear that necessarily $T\in \cL(X,Y)$.

Given a locally convex Hausdorff space $X$ and $T\in \cL(X)$, the resolvent set $\rho(T;X)$ of $T$ consists of all $\lambda\in\C$ such that the resolvent $R(\lambda,T):=(\lambda I-T)^{-1}$ exists in $\cL(X)$. The set $\sigma(T;X):=\C\setminus \rho(T;X)$ is called the \textit{spectrum} of $T$. The \textit{point spectrum}  $\sigma_{pt}(T;X)$ of $T$ consists of all $\lambda\in\C$ (also called eigenvalues of $T$) such that $\lambda I-T$ is not injective. For the spectral theory of compact operators in locally convex spaces we refer to \cite{Ed,Gr}, for example. In particular, by \cite[Theorem 9.10.2]{Ed}, the spectrum $\sigma(T;X)$ of a compact operator $T\in \cL(X)$ is either finite or is a countable sequence converging to $0$, every non-zero $\lambda \in \sigma(T;X)$ is an eigenvalue of $T$ and the associated spectral manifold $X_{\lambda}:= \{x \in X \ | \ Tx = \lambda x \}$ is of finite dimension.

The proof of our next result requires some preliminaries from \cite{APe} and \cite{Pe} about the resolvent $R(\lambda,\sC)$, in particular for ${\rm Re}\, \lambda = 0, \ \lambda \neq 0$. In the notation of \cite{APe}, the Ces\`aro operator $\sC$ is the generalized Ces\`aro operator $\sC_g(z):=(1/z) \int_0^z f(\zeta)g'(\zeta) d \zeta$ for an analytic function $g \in H(\D)$ with $g'(z)=1/(1-z)$. In order to find the solution $f$ of the resolvent equation $\lambda f - \sC_g f = h, \ h \in H(\D)$, the analytic function $u$ in $\D \setminus (-1,0]$ is introduced in \cite{APe} (2.3)
$$
u(z)=z^{g'(0)} \exp\Big(\int_0^z \frac{g'(\zeta)-g'(0)}{\zeta} \Big), \ \ z \in \D \setminus (-1,0].
$$
This function satisfies $u(z)=z/(1-z)$ if $g'(z)=1/(1-z)$. For non positive integers $\alpha$, the powers $z^\alpha = \exp(\alpha \log_0(z))$ are defined using the principal branch of the logarithm $\log_0(z)=\log|z| + i\ {\rm arg}\ z$ with ${\rm arg}\ z \in (-\pi, \pi)$ and $z \in \D \setminus (-1,0]$.

Given an analytic function $h$ on $\D$ we denote by $m_0(h)$ the multiplicity of $0$ at the origin, and $m_0(h)=0$ if $h(0) \neq 0$. We state \cite[Proposition 2.1 (i)]{APe} since it will be used in the proof of Theorem \ref{spectrumCesaro}.

\begin{proposition}(\cite[Proposition 2.1 (i)]{APe})\label{AlemanPersson} Let $\lambda \in \C\setminus \{0\}$. If $h \in H(\D)$ satisfies $m_0(h) > {\rm Re}\frac{g'(0)
}{\lambda} - 1$, then the equation $\lambda f - \sC_g f = h$ has the analytic solution $f$ in $\D$ given by
$$
f(z)= R(\lambda,\sC)h(z)= \frac{h(z)}{\lambda} + \frac{u(z)^{1/\lambda}}{\lambda^2 z}\int_0^z u(\zeta)^{-1/\lambda} g'(\zeta) h(\zeta) d\zeta.
$$
\end{proposition}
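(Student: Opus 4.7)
The plan is to convert the resolvent equation $\lambda f - \sC_g f = h$ into a first-order linear ODE and solve it by the integrating factor $u^{-1/\lambda}$. Setting $F(z):=zf(z)$, the identity $\lambda zf(z) - \int_0^z f(\zeta)g'(\zeta)\,d\zeta = zh(z)$ differentiates to
\[
\lambda F'(z) - \frac{g'(z)}{z}F(z) = (zh(z))'.
\]
The function $u$ is precisely designed so that $u'/u = g'/z$, which one verifies by taking the logarithmic derivative of its defining expression. Consequently $(u^{-1/\lambda})'(z) = -\frac{g'(z)}{\lambda z}\,u(z)^{-1/\lambda}$, and dividing the ODE by $\lambda$ and multiplying by $u^{-1/\lambda}$ recasts it as
\[
\frac{d}{dz}\bigl[u(z)^{-1/\lambda} F(z)\bigr] = \frac{u(z)^{-1/\lambda}}{\lambda}\,(zh(z))'.
\]

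Next I would integrate this identity from $0$ to $z$ along a path in $\D\setminus(-1,0]$. Writing $u(\zeta)=\zeta^{g'(0)}v(\zeta)$ with $v$ holomorphic on $\D$ and $v(0)=1$, the prospective analytic $F$ vanishes at $0$ to order at least $1+m_0(h)$, so the left endpoint contributes a term of order $\zeta^{1+m_0(h)-g'(0)/\lambda}$; this vanishes exactly when $\mathrm{Re}(1+m_0(h)-g'(0)/\lambda)>0$, which is the hypothesis. Hence
\[
F(z) = \frac{u(z)^{1/\lambda}}{\lambda}\int_0^z u(\zeta)^{-1/\lambda}(\zeta h(\zeta))'\,d\zeta.
\]
One integration by parts, using again $(u^{-1/\lambda})'(\zeta) = -(g'(\zeta)/(\lambda\zeta))\,u(\zeta)^{-1/\lambda}$, turns the integral into $u(z)^{-1/\lambda}zh(z) + \frac{1}{\lambda}\int_0^z u(\zeta)^{-1/\lambda}g'(\zeta)h(\zeta)\,d\zeta$ (the new boundary term at $0$ vanishing by the same estimate), and dividing by $z$ yields exactly the claimed closed form for $f$.

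The final step is to justify that the expression for $f$ is holomorphic on all of $\D$ and indeed satisfies the resolvent equation. With $u(z)=z^{g'(0)}v(z)$, the branch factor $u(z)^{1/\lambda}/z = z^{g'(0)/\lambda-1}v(z)^{1/\lambda}$ outside the integral is compensated by the near-origin behaviour $z^{1 + m_0(g'h) - g'(0)/\lambda}$ supplied by the integral, so the fractional power and the cut $(-1,0]$ both disappear and $f$ extends analytically through $0$; on $\D\setminus(-1,0]$ analyticity is transparent. Substituting the formula back into $\lambda f - \sC_g f$ and retracing the ODE calculation recovers $h$. The main obstacle, and the entire reason for the hypothesis on $m_0(h)$, is the bookkeeping at $z=0$: this single inequality is what is simultaneously needed for integrability of $u^{-1/\lambda}g'h$ near $0$, for vanishing of the two boundary terms produced by the integration and the integration by parts, and for cancellation of the fractional branches so that $f$ is single-valued and holomorphic on the whole disc.
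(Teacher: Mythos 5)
The paper itself gives no proof of this proposition: it is quoted verbatim from Aleman--Persson \cite[Proposition 2.1 (i)]{APe}, and the only added content in the paper is the remark that the formula, defined on $\D\setminus(-1,0]$ via the principal branch, extends analytically to all of $\D$. Your integrating-factor argument is precisely the standard route (and essentially that of \cite{APe}): the reduction to $\lambda F'-\tfrac{g'}{z}F=(zh)'$ with $F=zf$, the identity $u'/u=g'/z$, the integration by parts, and the final substitution check are all correct, and you correctly identify that the hypothesis $m_0(h)>\mathrm{Re}\,\tfrac{g'(0)}{\lambda}-1$ is exactly what controls the behaviour at the origin. Two small points of precision. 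First, your opening claim that a prospective analytic solution $F$ vanishes at $0$ to order at least $1+m_0(h)$ is not true of every analytic solution (when $\lambda=g'(0)/(n+1)$ for some $n<m_0(h)$, which the hypothesis permits, solutions are not unique and may vanish to lower order); this is harmless because that direction is only motivation, and the logical content of your proof is the verification that the displayed formula is analytic and satisfies the equation. Second, the single-valuedness and analyticity through $0$ deserve one more line than "the fractional powers compensate": writing $u=z^{g'(0)}v$ with $v$ holomorphic and zero-free, expand $v^{-1/\lambda}g'h=\sum_{n\ge m}a_n\zeta^n$ and integrate term by term to get
\[
\frac{u(z)^{1/\lambda}}{z}\int_0^z u(\zeta)^{-1/\lambda}g'(\zeta)h(\zeta)\,d\zeta
= v(z)^{1/\lambda}\sum_{n\ge m}\frac{a_n\,z^{n}}{\,n+1-g'(0)/\lambda\,},
\]
where all denominators have positive real part by the hypothesis; this exhibits the extension as a genuine power series on $\D$ and removes the cut. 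With that detail supplied, your proof is complete and coincides in method with the cited source.
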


An inspection of the proof shows that $f(z)$ is an analytic function  on $\D$ extending the function given by this formula on $\D \setminus (-1,0]$ defined using the principal branch of the logarithm.

As a consequence of Proposition \ref{AlemanPersson}, the resolvent of the Ces\`aro operator $\sC$, that is for $g'(z)=1/(1-z)$, satisfies
$$
R(\lambda,\sC) h(z) = \frac{h(z)}{\lambda} + \frac{1}{\lambda^2} z^{(1/\lambda)-1}(1-z)^{-1/\lambda} \int_0^z \zeta^{-1/\lambda} (1-\zeta)^{(1/\lambda)-1} h(\zeta) d\zeta,
$$
if $m_0(h) > {\rm Re}\frac{1}{\lambda} - 1$, since $u(z)=z/(1-z)$. This is precisely the formula $(2.7)$ in \cite{Pe}.

\begin{theorem}\label{spectrumCesaro}
The Ces\`aro operator $\sC: VH(\D) \rightarrow VH(\D)$ satisfies
\begin{itemize}
\item[(i)] $\sigma_{pt}(\sC;VH(\D))= \emptyset$.

\item[(ii)] $\sigma(\sC;VH(\D))={0} \cup \{\lambda \ | \ {\rm Re}\, \lambda > 0 \}$.
\end{itemize}
\end{theorem}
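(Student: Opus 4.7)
My plan is to treat three regions of $\lambda$ separately, using Proposition \ref{AlemanPersson} as the central tool. For $(i)$, the eigenvalue equation $\sC f = \lambda f$ reduces, when $\lambda \neq 0$, to the first-order linear ODE $\lambda z(1-z)f'(z) = (1 - \lambda(1-z))f(z)$, whose general solution is $f(z) = C z^{1/\lambda - 1}(1-z)^{-1/\lambda}$. Analyticity at the origin forces $1/\lambda \in \N$, and then $C z^{n-1}(1-z)^{-n}$ has polynomial growth of exact order $n$; since $VH(\D) \subset \bigcap_{\gamma > 0} A^{-\gamma}$ (recalled in Section 1), this cannot lie in $VH(\D)$ unless $C = 0$. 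The case $\lambda = 0$ is immediate from differentiating $\int_0^z f(\zeta)/(1-\zeta)\,d\zeta \equiv 0$. A by-product: $\lambda I - \sC$ is injective on $H(\D)$ whenever $\lambda \notin \{0\} \cup \{1/n : n \in \N\}$.

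For $(ii)$, I would first show every $\lambda \neq 0$ with $\text{Re}\,\lambda \leq 0$ lies in $\rho(\sC; VH(\D))$. The hypothesis $m_0(h) > \text{Re}(1/\lambda) - 1$ of Proposition \ref{AlemanPersson} holds automatically for every $h \in H(\D)$, so the resolvent formula defines $R(\lambda, \sC)h \in H(\D)$. Parametrising the integral by $\zeta = tz$, $t \in [0,1]$, and using $|1 - tz| \geq 1 - t|z|$, the pointwise estimate reduces to the scalar integral $\int_{1-r}^1 u^{\text{Re}(1/\lambda) - 1}(-\log u)^k\,du$; its asymptotics as $r \to 1^-$ combine with the outer factor $|1-z|^{-\text{Re}(1/\lambda)}$ to cancel matching polynomial weights, leaving $|R(\lambda, \sC)h(z)| = O((-\log(1-|z|))^{k+1})$ whenever $h \in H^\infty_{v_k}$. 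Thus $R(\lambda, \sC)(H^\infty_{v_k}) \subset H^\infty_{v_{k+1}}$; the closed graph theorem makes each step continuous, and Lemma \ref{continuityLB} lifts continuity to $VH(\D)$.

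For $\text{Re}\,\lambda > 0$ with $\lambda \notin \{1/n\}_{n \in \N}$, $\lambda I - \sC$ is injective on $H(\D)$ by the by-product above, so Proposition \ref{AlemanPersson} gives the unique $H(\D)$-preimage of any admissible $h$. With $h(z) = z^N$ and $N > \text{Re}(1/\lambda) - 1$, along the positive real axis $\int_0^r s^{N-1/\lambda}(1-s)^{1/\lambda - 1}\,ds \to B(N+1-1/\lambda, 1/\lambda) \neq 0$, so the solution $f$ inherits the $(1-r)^{-\text{Re}(1/\lambda)}$ polynomial blow-up and leaves $VH(\D)$, placing $\lambda$ in $\sigma(\sC; VH(\D))$. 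The values $\lambda = 1/n$ are handled with $h(z) = z^{n-1}$: the Taylor-coefficient recurrence $((j+1)\lambda - 1)\hat{f}(j) = (j+1)\hat{h}(j) + \hat{s}_{j-1}$ propagates $\hat{s}_j = 0$ for $j \leq n-2$ and then produces the inconsistency $0 = n$ at $j = n-1$, ruling out any $H(\D)$-solution.

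Finally, for $\lambda = 0$: since $\sC$ is continuous and injective on the (DFS)-space $VH(\D)$, De Wilde's open mapping theorem reduces $0 \in \sigma$ to exhibiting $h \in VH(\D)$ whose $H(\D)$-preimage $\sC^{-1}h(z) = (1-z)(h(z) + zh'(z))$ falls outside $VH(\D)$. Taking $h(z) = -\log(1+z) \in H^\infty_{v_1}$ gives $\sC^{-1}h(-r) = -(1+r)\log(1-r) + r(1+r)/(1-r)$, which grows like $2/(1-r)$ as $r \to 1^-$, so $\sC^{-1}h$ has polynomial growth of order one near $z = -1$ and in particular does not belong to $\bigcap_{\gamma > 0} A^{-\gamma} \supset VH(\D)$. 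The main technical obstacle is the left-half-plane estimate of the second paragraph: for $\text{Re}(1/\lambda) < 0$ the integrand $(1-\zeta)^{1/\lambda - 1}$ is more singular than $(1-\zeta)^{-1}$, and one must verify that the cancellation with $(1-z)^{-1/\lambda}$ is precise enough to leave only a one-step loss $k \mapsto k+1$ in the logarithmic scale, uniformly in the direction of approach to the boundary.
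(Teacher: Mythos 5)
Most of your outline is sound, and it is in fact more self-contained than the paper's proof, which cites Persson for the eigenvectors in (i), for the fact that $z^{n-1}\notin {\rm Im}((1/n)I-\sC)$, and for the function $e_\lambda$ when ${\rm Re}\,\lambda>0$; your ODE computation, your coefficient recurrence at $\lambda=1/n$, your Beta-function argument for ${\rm Re}\,\lambda>0$, $\lambda\neq 1/n$, and your argument at $\lambda=0$ all work. The genuine gap is exactly the point you flag at the end and do not resolve: the left half-plane. Put $a={\rm Re}(1/\lambda)<0$. After you replace $|1-tz|$ by $1-t|z|$, your scalar integral behaves like $(1-|z|)^{a}(-\log(1-|z|))^{k}$, while the outer factor only contributes $|1-z|^{-a}$, which is \emph{bounded} (since $-a>0$); the two quantities are comparable only when $z\to 1$ radially. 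For $z\to e^{i\theta}$ with $\theta\neq 0$ (say $z=-r$, $r\to 1^-$) the bound you obtain blows up like $(1-r)^{a}$ and nothing cancels it, so the claimed conclusion $R(\lambda,\sC)(H^\infty_{v_k})\subset H^\infty_{v_{k+1}}$ does not follow from the estimate you describe, and the inclusion $\{{\rm Re}\,\lambda<0\}\subset\rho(\sC;VH(\D))$ is left unproved. (On the imaginary axis, where $a=0$, the crude bound $|1-tz|\geq 1-t|z|$ does suffice and reproduces the paper's one-log-loss estimate $v_k\mapsto v_{k+1}$; the problem is only for $a<0$, where the integrand is more singular than $|1-\zeta|^{-1}$.)

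Two ways to close the gap. Either sharpen the pointwise bound: since $1-tz=(1-t)+t(1-z)$ with both summands of nonnegative real part, one has $|1-tz|\geq \tfrac{1}{\sqrt 2}\bigl((1-t)+t|1-z|\bigr)$, whence $\int_0^1|1-tz|^{a-1}\,dt\leq C_a\bigl(1+|1-z|^{a}\bigr)$; multiplying by the outer factor $|1-z|^{-a}\leq 2^{-a}$ and tracking the harmless argument factors $e^{\pi|{\rm Im}(1/\lambda)|/2}$ (the principal arguments of $1-tz$, $1-z$ lie in $(-\pi/2,\pi/2)$) gives a bound by a constant times $M(h,|z|)$, so in fact $R(\lambda,\sC)$ maps $H^\infty_{v_k}$ into $H^\infty_{v_k}$ for ${\rm Re}\,\lambda<0$, with no logarithmic loss at all. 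Or do what the paper does: avoid the integral formula in the left half-plane altogether and use Persson's representation $R(\lambda,\sC)h=\lambda^{-1}h+\lambda^{-2}\int_0^\infty e^{t/\lambda}S_th\,dt$, where $S_th(z)=\frac{\varphi_t(z)}{z}h(\varphi_t(z))$ and $\varphi_t(z)=e^{-t}z/((e^{-t}-1)z+1)$; by the Schwarz lemma and the Contreras--Hern\'andez-D\'{\i}az criterion each $S_t$ is a contraction on every $H^\infty_{v_k}$, and $\int_0^\infty e^{t\,{\rm Re}(1/\lambda)}dt<\infty$, which yields continuity of the resolvent on each step in one line. With either repair, the rest of your argument goes through and gives the stated spectrum.
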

\begin{proof}
(i) By Section 2 in \cite{Pe}, the eigenvalues $\lambda$ of $\sC$ on $H(\D)$ are $\lambda= 1/n, \ n \in \N$ with associated eigenvector $e_n(z):=z^{n-1}(1-z)^{-n}$. Moreover, every eigenvector of $\lambda=1/n$ is a multiple of $e_n$. It is easy to see that $e_n \notin VH(\D)$. Consequently, $\sigma_{pt}(\sC;VH(\D))= \emptyset$.

(ii) We first show that $0 \in \sigma(\sC;VH(\D))$. The inverse of $\sC$ on $H(\D)$ is given by $\sC^{-1} f(z) = (1-z)(z f(z))'$ \cite[page 1185]{Pe}. The function $h(z):=\log(1+z), z \in \D,$ belongs to $H^\infty_{v_1} \subset VH(\D)$, but $$\sC^{-1} h(z) = (1-z) h(z) + z \frac{1-z}{1+z}$$ does not belong to $VH(\D)$. Otherwise $z \frac{1-z}{1+z} \in VH(\D)$. This is not possible, because
$$
\lim_{r \rightarrow -1} \Big|r \frac{1-r}{1+r}\Big| |\log(1-r)|^{-k} = \infty
$$
for all $k \in \N$.

The image of $((1/n) I - \sC)$ does not contain the function $z^{n-1}$, see \cite[page 1184]{Pe}. Then $1/n \in \sigma(\sC;VH(\D))$ for each $n \in \N$.

If ${\rm Re}\, \lambda >0$ and $\lambda \neq 1/n$ for each $n \in \N$, then select $\gamma>0$ such that ${\rm Re}\, (1/\lambda) \geq \gamma >0$ and $m \in \N_0$ such that $m < {\rm Re}\, (1/\lambda) < m+1$. By \cite[Proposition 4]{Pe} and its proof, there is a unique solution $e_{\lambda} \in H(\D)$ of $(\lambda \sC - I) e_{\lambda} = 1$ and it satisfies $\lim_{|z| \rightarrow 1}|e_{\lambda}(z)|(1-|z|)^{\gamma} \neq 0$, then $e_\lambda \notin A^{-\gamma}_0$. Therefore, $e_\lambda \notin VH(\D)$, and $\lambda \in \sigma(\sC;VH(\D))$.

We now fix $\lambda \in \C$ with ${\rm Re}\, \lambda < 0$. As in \cite[Section 3]{Pe}, we consider the functions
$$
\varphi_t(z):= \frac{e^{-t} z}{(e^{-t}-1)z +1}, \ \ z \in \D, \ t \geq 0,
$$
and the operators, for $f \in VH(\D)$,
$$
S_t(f)(z):= \frac{\varphi_t(z)}{z} f(\varphi_t(z)), \ \ z \in \D, \ t \geq 0.
$$
The map $\varphi_t$ is analytic on $\D$ and satisfies $\varphi_t(\D) \subset \D$ and $\varphi_t(0)=0$. By Schwarz Lemma, $|\varphi_t(z)| \leq |z|$ for all $z \in \D$. We can apply \cite[proposition 3.1]{CH} (see also \cite[Theorem 51]{B}) to conclude that $S_t: H^\infty_{v_k} \rightarrow H^\infty_{v_k}$ is continuous for each $k$ and $\|S_t\| \leq 1$.

Since ${\rm Re}\, \lambda < 0$, we have ${\rm Re}\, (1/\lambda) < 0$ and we conclude that the identity $(3.16)$ in \cite{Pe} for the resolvent of the Ces\`aro operator holds for $h \in VH(\D)$:
$$
R(\lambda,\sC) h(z) = \frac{h(z)}{\lambda} + \frac{1}{\lambda^2} \int_0^{\infty} e^{t/\lambda} S_t h(z) dt.
$$
Therefore, for each $h \in H^\infty_{v_k}$ and each $k \in \N$, we get
$$
\|R(\lambda,\sC) h\|_{v_k} \leq \frac{\|h\|_{v_k}}{|\lambda|} + \frac{\|h\|_{v_k}}{|\lambda|^2} \int_0^{\infty} e^{t {\rm Re}\, (1/\lambda)} \|S_t\| dt \leq C_{\lambda} \|h\|_{v_k},
$$
because $||S_t|| \leq 1$ and $\int_0^{\infty} e^{t {\rm Re}\,(1/\lambda)} dt< \infty$.

This implies that $R(\lambda,\sC): VH(\D) \rightarrow VH(\D)$ is continuous and $\{\lambda \ : \ {\rm Re}\, \lambda < 0 \} \subset \rho(\sC;VH(\D))$.

It remains to consider the case ${\rm Re}\, \lambda = 0, \ \lambda \neq 0$. In this case, the multiplicity $m_0(h)$ of the zero of each analytic function $h \in VH(\D)$ at the origin is greater that ${\rm Re}\, (1/\lambda) - 1 = -1$ and we can apply Proposition \ref{AlemanPersson} (\cite[Proposition 2.1 (i)]{APe}) and $(2.7)$ in \cite{Pe} to get, for $h \in H^\infty_{v_k}$,
$$
R(\lambda,\sC) h(z) = \frac{h(z)}{\lambda} + \frac{1}{\lambda^2} z^{(1/\lambda)-1}(1-z)^{-1/\lambda} \int_0^z \zeta^{-1/\lambda} (1-\zeta)^{(1/\lambda)-1} h(\zeta) d\zeta.
$$
If we write $\lambda=ib, b \in \R, b \neq 0,$ then $1/\lambda = -i/b$, and this formula becomes
$$
R(\lambda,\sC) h(z) = \frac{h(z)}{ib} - \frac{1}{b^2} z^{(-i/b)-1}(1-z)^{i/b} \int_0^z \zeta^{i/b} (1-\zeta)^{(-i/b)-1} h(\zeta) d\zeta.
$$
This is an analytic function  on $\D$ extending the function defined by this formula on $\D \setminus (-1,0]$ using the principal branch of the logarithm. We estimate the integral for $z \in \D \setminus (-1,0]$. First of all
$$
\int_0^z \zeta^{i/b} (1-\zeta)^{(-i/b)-1} h(\zeta) d\zeta = z^{(i/b)+1} \int_0^1 t^{i/b}(1-tz)^{-(i/b)-1} h(tz) dt.
$$
For each $\xi \in \D \setminus (-1,0]$, since we use the principal branch of the logarithm with ${\rm arg} z \in (-\pi, \pi)$, we get
$$
|\xi^{i/b}| = |\exp((i/b)(\log|\xi| + i\ {\rm arg}\ \xi)| = |\exp(-{\rm arg}\ \xi/b)| \leq \exp(\pi/|b|),
$$
and similarly $|\xi^{-i/b}| \leq \exp(\pi/|b|)$.
Accordingly, for $z \in \D \setminus (-1,0]$, we get the estimate
$$
\Big|\int_0^z \zeta^{i/b} (1-\zeta)^{(-i/b)-1} h(\zeta) d\zeta \Big| \leq \exp(2\pi/|b|) |z| \int_0^1 |1-tz|^{-1} |h(tz)| dt \leq
$$
$$
\leq \exp(2\pi/|b|) |z| \max_{|\zeta| = |z|} |h(\zeta)| \int_0^1 |1-tz|^{-1} dt \leq \exp(2\pi/|b|) |z| (-\log(1-|z|)) \max_{|\zeta| = |z|} |h(\zeta)|.
$$
Then, for $z \in \D \setminus (-1,0]$ with $|z| \geq 1-1/e$,
$$
v_{k+1}(z)|R(\lambda,\sC) h(z)| \leq v_{k+1}(z) \frac{|h(z)|}{|b|} + v_{k+1}(z) \frac{1}{b^2 |z|} \exp(4\pi/|b|) |z| (-\log(1-|z|)) \max_{|\zeta| = |z|} |h(\zeta)| \leq
$$
$$
\leq v_k(z) \Big(\frac{1}{|b|} + \exp(4\pi/|b|) \frac{1}{b^2} \Big) \max_{|\zeta| = |z|} |h(\zeta)|.
$$
Since $R(\lambda,\sC) h$ is an analytic function on $\D$ satisfying this estimate for $z \in \D \setminus (-1,0]$, we conclude
$$
\|R(\lambda,\sC) h\|_{v_{k+1}} \leq \Big(\frac{1}{|b|} + \exp(4\pi/|b|) \frac{1}{b^2} \Big) \|h\|_{v_k},
$$
and the resolvent operator $R(\lambda,\sC): VH(\D) \rightarrow VH(\D)$ is also continuous in this case.
\end{proof}

Observe that the methods utilized in \cite{ABR2} to determine the spectrum of $\sC$ on Korenblum type (LB)-spaces of analytic funtions cannot be used in the present setting, because $\sC$ is not continuous on the steps of the inductive limit. This is why direct arguments are necessary.

\begin{corollary}\label{noncompactCesaro}
The Ces\`aro operator $\sC: VH(\D) \rightarrow VH(\D)$ is not compact.
\end{corollary}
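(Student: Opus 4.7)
The plan is to derive the corollary directly from Theorem \ref{spectrumCesaro} via the general spectral theory of compact operators on locally convex spaces. The relevant input is \cite[Theorem 9.10.2]{Ed}, already cited in the paper: if $X$ is a locally convex Hausdorff space and $T \in \cL(X)$ is compact, then $\sigma(T;X)$ is either finite or a countable sequence converging to $0$, and every non-zero element of $\sigma(T;X)$ is an eigenvalue of $T$. Since $VH(\D)$ is a (DFS)-space, in particular locally convex and Hausdorff, this result applies to any compact operator on $VH(\D)$.

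I would then argue by contradiction. Suppose $\sC : VH(\D) \to VH(\D)$ is compact. By Theorem \ref{spectrumCesaro}(ii),
$$
\sigma(\sC;VH(\D)) = \{0\} \cup \{\lambda \in \C : {\rm Re}\, \lambda > 0\},
$$
which is uncountable; this contradicts the first assertion of the cited spectral theorem and already closes the argument. Alternatively, one may use Theorem \ref{spectrumCesaro}(i) instead: it gives $\sigma_{pt}(\sC;VH(\D)) = \emptyset$, so no non-zero point of $\sigma(\sC;VH(\D))$ is an eigenvalue of $\sC$, which contradicts the second assertion of the Edwards theorem. Either route yields the conclusion.

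Since all the technical work has been carried out in the proof of Theorem \ref{spectrumCesaro}, no substantial obstacle remains here. The only point that needs a brief sanity check is that the Edwards spectral dichotomy really does apply in the (DFS) setting of $VH(\D)$, which is immediate from the hypotheses of \cite[Theorem 9.10.2]{Ed}.
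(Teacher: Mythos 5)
Your argument is correct and is essentially the paper's own proof: the paper likewise deduces non-compactness directly from Theorem \ref{spectrumCesaro} together with the shape of the spectrum of a compact operator given by \cite[Theorem 9.10.2]{Ed}. Your additional observation that the empty point spectrum alone already contradicts the eigenvalue assertion of that theorem is a fine (equivalent) variant of the same idea.
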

\begin{proof}
This is a direct consequence of Theorem \ref{spectrumCesaro} and the shape of the spectrum of compact operators, see \cite[Theorem 9.10.2]{Ed}.
\end{proof}

Let $X$ be a locally convex Hausdorff space. An operator $T\in \cL(X)$ is called \textit{power bounded} if the sequence of iterates $\{T^n : \ n\in\N_0\}$ is an equicontinuous subset of $\cL(X)$. For a Banach space $X$, this means that $\sup_{n\in\N_0}\|T^n\|<\infty$.

The space $\cL(X)$ equipped with the topology of pointwise convergence on $X$ (i.e., the strong operator topology) is denoted by $\cL_s(X)$ and when it is  equipped with the topology $\tau_b$ of uniform convergence on the bounded subsets of $X$ it is denoted by $\cL_b(X)$.

Given $T\in \cL(X)$, the averages
\[
T_{[n]}:=\frac{1}{n}\sum_{m=1}^nT^m,\quad n\in\N,
\]
are usually called the Cesàro means of $T$. The operator $T$ is said to be \textit{mean ergodic} (resp., \textit{uniformly mean ergodic}) if $(T_{[n]})_{n\in\N}$ is a convergent sequence in $\cL_s(X)$ (resp., in $\cL_b(X)$). It is easy to see that $\frac{T^n}{n}=T_{[n]}-\frac{n-1}{n}T_{[n-1]}$, for $n\geq 2$, and hence, $\tau_s$-$\lim_{n\to\infty}\frac{T^n}{n}=0$ whenever $T$ is mean ergodic. Every power bounded operator on a Montel space $X$ is necessarily uniformly mean ergodic, \cite[Proposition 5.11]{BJP}.

Concerning the linear dynamics of $T\in \cL(X)$, with $X$ a  lcHs, the operator $T\in \cL(X)$ is called \textit{hypercyclic} if there exists $x\in X$ such that the orbit $\{T^nx\colon n\in\N_0\}$
is dense in $X$. If, for some $z\in X$, the projective orbit $\{\lambda T^n z\colon \lambda\in\C,\ n\in\N_0 \}$ is dense
in $X$, then $T$ is called \textit{supercyclic}. Clearly, hypercyclicity  implies supercyclicity.

More details for mean ergodic operators  can be seen in \cite{BJP,DSI,K}, and for linear dynamics in \cite{B-M,G-P}.

\begin{proposition}\label{meanergodicCesaro}
The Ces\`aro operator $\sC: VH(\D) \rightarrow VH(\D)$ is not power bounded, not mean ergodic and not supercyclic.
\end{proposition}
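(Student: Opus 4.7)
The plan is to deduce all three failures from Theorem~\ref{spectrumCesaro} together with a single growth estimate for the orbit of~$\mathbf{1}$.

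For \emph{not power bounded}, I would first use that power-boundedness of an operator $T$ on a locally convex Hausdorff space forces $\sigma(T;X)\subseteq\overline{\D}$: for $|\lambda|>1$ the Neumann series $\sum_{n\ge 0}T^n/\lambda^{n+1}$ converges equicontinuously and inverts $\lambda I-T$. This contradicts Theorem~\ref{spectrumCesaro}, since the spectrum of $\sC$ on $VH(\D)$ contains the whole open right half-plane. More concretely, I would also argue from an explicit unbounded orbit: starting from $\sC(\mathbf{1})(r)=-\log(1-r)/r\ge -\log(1-r)$ on $(0,1)$, induction on $n$ (using $u=-\log(1-\zeta)$ to evaluate $\int_0^r(-\log(1-\zeta))^n/(1-\zeta)\,d\zeta=(-\log(1-r))^{n+1}/(n+1)$, together with the trivial $1/r\ge 1$) yields the pointwise lower bound
\[
\sC^n(\mathbf{1})(r)\ \ge\ \frac{(-\log(1-r))^n}{n!},\qquad r\in(0,1).
\]
Hence $v_m(r)\,\sC^n(\mathbf{1})(r)\ge (-\log(1-r))^{n-m}/n!\to\infty$ as $r\to 1^-$ whenever $n>m$, so $\sC^n(\mathbf{1})\notin H^\infty_{v_m}$ for $n>m$. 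Since $VH(\D)$ is a DFS-space whose bounded subsets sit inside a single step $H^\infty_{v_m}$, the orbit $\{\sC^n(\mathbf{1})\}_n$ is not bounded in $VH(\D)$.

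For \emph{not mean ergodic}, mean ergodicity would force $\sC^n(f)/n=\sC_{[n]}(f)-\tfrac{n-1}{n}\sC_{[n-1]}(f)\to 0$ in $VH(\D)$ for every $f$, hence $\{\sC^n(f)/n\}_n$ bounded. The estimate above applied to $f=\mathbf{1}$ gives $\sC^n(\mathbf{1})/n\notin H^\infty_{v_m}$ for $n>m$, so by DFS-regularity $\{\sC^n(\mathbf{1})/n\}_n$ is unbounded in $VH(\D)$.

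For \emph{not supercyclic} I use the continuous Taylor-coefficient functionals $\phi_i(h):=\widehat h(i)$, $i=0,1$, which lie in $VH(\D)'$ via the continuous inclusion $VH(\D)\hookrightarrow H(\D)$. Since $\sC f(0)=f(0)$, one has $\phi_0\circ\sC=\phi_0$; the identity $\widehat{\sC f}(1)=(\widehat f(0)+\widehat f(1))/2$ yields the recursion $a_{n+1}=(f(0)+a_n)/2$ for $a_n:=\phi_1(\sC^n f)$, so $a_n\to f(0)$. Suppose $f$ is supercyclic. Then $f(0)\ne 0$, else the projective orbit lies in the proper closed subspace $\ker\phi_0$. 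The affine slice $S:=\{g\in VH(\D):g(0)=1,\ g'(0)=0\}$ is infinite-dimensional, whereas the countable union $\bigcup_{n\ge 0}\C\cdot\sC^n f$ meets $S$ in at most countably many points; pick $g\in S$ outside this union. For any sequences with $\mu_k\sC^{n_k}f\to g$ in $VH(\D)$ (hence in $H(\D)$), the $n_k$ cannot remain bounded (otherwise $g$ would belong to a finite union of closed lines from the orbit), so along a subsequence $n_k\to\infty$. Then $\phi_0$ forces $\mu_k\to 1/f(0)\ne 0$, while $\phi_1$ forces $\mu_k a_{n_k}\to 0$ and hence $\mu_k\to 0$, a contradiction. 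The main technical obstacle is the sharp pointwise lower bound on $\sC^n(\mathbf{1})$; once available, regularity of the DFS inductive limit $VH(\D)=\mathrm{ind}_k H^\infty_{v_k}$ converts pointwise growth into failure of boundedness for free, and the supercyclicity step is essentially the Hilden--Wallen idea applied to the two explicit eigenfunctionals $\phi_0$ and $\phi_0-\phi_1$ of $\sC^t$, which carry the distinct eigenvalues $1$ and $1/2$.
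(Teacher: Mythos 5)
Your argument is correct, but it follows a genuinely different route from the paper. The paper derives non--mean ergodicity from Yosida's theorem: by Theorem~\ref{spectrumCesaro}(i) one has ${\rm ker}(I-\sC)=\{0\}$ while $\overline{{\rm Im}(I-\sC)}$ lies in the proper closed hyperplane $\{f\in VH(\D): f(0)=0\}$, so the ergodic decomposition fails; non--power boundedness then follows because on the Montel space $VH(\D)$ power bounded would imply uniformly mean ergodic; and non--supercyclicity is obtained by the comparison principle from the known fact that $\sC$ is not supercyclic on $H(\D)$. You instead prove everything directly: the induction $\sC^{n}(\mathbf{1})(r)\ge(-\log(1-r))^{n}/n!$ is correct (the integrand is positive on $(0,1)$, so the inductive lower bound may be inserted under the integral), and together with regularity of the (DFS)-limit it shows that the orbit of $\mathbf{1}$, and even $\sC^{n}(\mathbf{1})/n$, is unbounded, which kills power boundedness and (via $T^{n}/n\to 0$ for mean ergodic $T$) mean ergodicity without invoking Yosida or the Montel property; your alternative spectral argument (power bounded plus completeness forces $\sigma(\sC)\subset\overline{B(0,1)}$ via the Neumann series, contradicting Theorem~\ref{spectrumCesaro}) is also valid. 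For supercyclicity you replace the comparison principle by a Hilden--Wallen type argument with the transposed eigenfunctionals $\phi_0$ and $\phi_0-\phi_1$ (eigenvalues $1$ and $1/2$), and the computation $a_{n+1}=(f(0)+a_n)/2$, $a_n\to f(0)$, is right. One small caveat: you extract convergent \emph{sequences} from density of the projective orbit, although $VH(\D)$ is not metrizable; either quote that (DFS)-spaces are sequential, or run the same dichotomy with nets (bounded indices give a subnet with constant $n$, hence $g$ in a closed line; otherwise a subnet with $n\to\infty$), which works verbatim. Overall your proof is more self-contained and quantitative (explicit orbit growth, explicit adjoint eigenvectors), at the cost of more computation, whereas the paper's proof is shorter but leans on Yosida's theorem, the Montel structure and the cited non-supercyclicity of $\sC$ on $H(\D)$.
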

\begin{proof}
By Yosida's Theorem \cite[Theorem 5.5]{BJP}, if $\sC: VH(\D) \rightarrow VH(\D)$ is mean ergodic, then
$$
VH(\D) = {\rm ker}(I-\sC) \oplus \overline{{\rm Im}(I-\sC)}.
$$
Theorem \ref{spectrumCesaro} (i) implies that ${\rm ker}(I-\sC) = \{ 0 \}$. On the other hand $\overline{{\rm Im}(I-\sC)} \subset \{ f \in VH(\D) \ | \ f(0)=0 \}$, which is a proper closed hyperplane of $VH(\D)$. This implies that $\sC: VH(\D) \rightarrow VH(\D)$ is not mean ergodic.

On the other hand, if $\sC: VH(\D) \rightarrow VH(\D)$ were power bounded, then it would be uniformly mean ergodic, since $VH(\D)$ is a Montel space. We have just seen that this operator is not mean ergodic.

The Ces\`aro operator $\sC: H(\D) \rightarrow H(\D)$ is not supercyclic by \cite[proposition 3.5]{ABR_power} and the space $VH(\D)$ is dense in $H(\D)$ since it contains the polynomials. The comparison principle \cite[Subsection 1.1.1]{B-M} ensures that $\sC: VH(\D) \rightarrow VH(\D)$ is not supercyclic.
\end{proof}


\section{Generalized Ces\`aro operators.}\label{genCesoper}

For $t\in [0,1]$ the generalized Ces\`aro operator   $C_t\colon H(\D)\to H(\D)$,  for $f \in H(\D)$,  is defined by
$$ C_tf(0):=f(0)$$
and
\begin{equation}\label{eq.formula-int}
C_tf(z):=\frac{1}{z}\int_0^z\frac{f(\zeta)}{1-t\zeta}\,d\zeta,\ z\in \D\setminus\{0\},
\end{equation}

The operator $C_t$ also has the following representation

\begin{align}
	&C_tf(z)=\sum_{n=0}^\infty\left(\frac{t^na_0+t^{n-1}a_1+\ldots +a_n}{n+1}\right)z^{n}\nonumber\\
	&=\sum_{n=0}^\infty\left(\frac{t^n\hat{f}(0)+t^{n-1}\hat{f}(1)+\ldots +\hat{f}(n)}{n+1}\right)z^{n},
\end{align}

\noindent
where the coefficients of the series are precisely those of the discrete generalized Ces\`aro operator, which is defined for  $x \in\mathbb{C}^{\mathbb{N}_0}$ by

$$
	C_tx:=\left(\frac{t^nx_0+t^{n-1}x_1+\ldots +x_n}{n+1}\right)_{n},\quad x=(x_n)_{n}\in\mathbb{C}^{\mathbb{N}_0}.
$$

The operator $C_0$ is given by $C_0f(z)=\frac{1}{z}\int_0^z f(\zeta)\,d\zeta$ for $z\not=0$ and $C_0f(0)=f(0)$, which is the classical Hardy operator in $H(\D)$, and the operator $C_1$ is precisely the Ces\`aro operator $\sC$ investigated in Section \ref{Cesoper}.

The (discrete) generalized Cesàro operators $C_t$, for $t\in [0,1]$, were first investigated by Rhaly, \cite{R1}, \cite{R2}. We refer the reader to the introduction of the paper \cite{CR4} and the references therein. The behaviour of $C_t, 0 \leq t < 1$ on $H(\D)$ and on the weighted spaces $H_v^\infty$ and $H_v^0$ was investigated recently in \cite{ABR4}. The following results are proved in this article.

\begin{proposition}\label{from ABRrecent}
\begin{itemize}
\item[(i)] (\cite[Propositions 2.1 and 3.3.]{ABR4}) For every $t\in [0,1]$ the  operator $C_t\colon H(\D)\to H(\D)$ is a surjective continuous isomorphism.

\item[(ii)] (\cite[Propositions 2.5 and 2.7]{ABR4}) For each  weight function $v$ and for each $t\in [0,1)$ the  operators $C_t\colon H_v^\infty\to H_v^\infty$ and $C_t\colon H^0_v\to H^0_v$ are continuous and even compact.
\end{itemize}
\end{proposition}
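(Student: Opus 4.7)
The plan for (i) is to establish continuity on $H(\D)$ and then exhibit an explicit inverse. From the parametrization $C_t f(z) = \int_0^1 f(sz)/(1-tsz)\,ds$ one gets the radial estimate
$$
M(C_t f, r) \leq M(f,r)\int_0^1\frac{ds}{1-tsr},\qquad 0<r<1,
$$
whose right-hand side is finite for all $t \in [0,1]$ (explicitly $-\log(1-tr)/(tr)$ if $t>0$, and $1$ if $t=0$), giving continuity of $C_t\colon H(\D) \to H(\D)$. For the bijectivity, differentiating $zC_t f(z) = \int_0^z f(\zeta)/(1-t\zeta)\,d\zeta$ yields $f(z) = (1-tz)\bigl(C_t f(z) + z(C_t f)'(z)\bigr)$, so $C_t$ has the manifestly continuous inverse $g \mapsto (1-tz)(g(z) + zg'(z))$ on $H(\D)$.

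For (ii), fix $t \in [0,1)$ and a weight $v$. The crucial observation, available only because $t<1$, is the uniform lower bound $|1-tsz| \geq 1-t > 0$ for $s \in [0,1]$ and $z \in \overline{\D}$. Combined with the fact that $v$ is non-increasing, so that $v(s|z|) \geq v(|z|)$ for $s \in [0,1]$, this yields
$$
v(z)|C_t f(z)| \leq \frac{\|f\|_v}{1-t}\int_0^1 \frac{v(z)}{v(s|z|)}\,ds \leq \frac{\|f\|_v}{1-t},
$$
proving continuity of $C_t\colon H^\infty_v \to H^\infty_v$. To upgrade to compactness I would invoke the standard criterion that a continuous operator $T$ on $H^\infty_v$ is compact provided $\lim_{r\to 1^-}\sup_{\|f\|_v\leq 1}\sup_{|z|\geq r} v(z)|Tf(z)| = 0$. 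The displayed inequality reduces this to showing that $\int_0^1 v(z)/v(s|z|)\,ds \to 0$ as $|z|\to 1^-$: the integrand is bounded by $1$, and for each fixed $s\in[0,1)$ one has $v(s|z|)\to v(s)>0$ while $v(z)\to 0$, so the integrand tends to $0$ pointwise and dominated convergence closes the argument. The same computation shows $C_t f \in H^0_v$ whenever $f\in H^\infty_v$, so in fact $C_t\colon H^\infty_v \to H^0_v$ is already compact, and restriction to the closed subspace $H^0_v$ gives the second compactness statement.

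The main obstacle is really the compactness step, which rests on two ingredients: the uniform lower bound on $|1-tsz|$, available precisely when $t<1$, and the pointwise decay $v(z)/v(s|z|)\to 0$ for fixed $s<1$, which is the defining feature of a weight vanishing at the boundary. Both ingredients are in some sense optimal: at $t=1$ the first fails, and indeed $\sC=C_1$ fails to be compact even on the much larger space $VH(\D)$ by Corollary \ref{noncompactCesaro}.
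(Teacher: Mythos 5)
Your proof is essentially correct, but note that the paper itself offers no proof of this proposition: it is stated purely as a citation of \cite[Propositions 2.1, 3.3, 2.5 and 2.7]{ABR4}, so what you have produced is a self-contained replacement for an outsourced result rather than an alternative to an argument in the text. Your route is sound and elementary: the parametrization $C_tf(z)=\int_0^1 f(sz)(1-tsz)^{-1}ds$ gives continuity on $H(\D)$ for all $t\in[0,1]$, and the bound $|1-tsz|\ge 1-t$ together with monotonicity of $v$ gives $\|C_tf\|_v\le (1-t)^{-1}\|f\|_v$, while the refined estimate $v(z)|C_tf(z)|\le (1-t)^{-1}\|f\|_v\int_0^1 v(|z|)/v(s|z|)\,ds$ plus dominated convergence (using $\lim_{r\to1}v(r)=0$, which is part of this paper's definition of weight) yields uniform vanishing at the boundary on the unit ball. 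Two points deserve to be made explicit. First, in (i) you only derive a left inverse from $f(z)=(1-tz)(zC_tf(z))'$; surjectivity requires the (one-line) converse check that $C_t$ applied to $g^*(z):=(1-tz)(zg(z))'$ returns $g$, namely $C_tg^*(z)=z^{-1}\int_0^z(\zeta g(\zeta))'d\zeta=g(z)$, after which continuity of $g\mapsto g^*$ on $H(\D)$ gives the isomorphism. Second, the compactness criterion you invoke --- that uniform vanishing of $T(B_{H_v^\infty})$ at the boundary implies compactness --- carries all the weight of the argument; it follows from the standard characterization of relatively compact subsets of $H^0_v$ (norm-bounded sets $A$ with $\lim_{r\to1}\sup_{f\in A}\sup_{|z|\ge r}v(z)|f(z)|=0$), proved by a Montel/normal-families argument, and should be proved or cited rather than merely called standard. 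With those two additions your argument fully recovers both items, including the fact that $C_t$ maps $H_v^\infty$ into $H^0_v$ and restricts to a compact operator on $H^0_v$, which is exactly what the paper imports from \cite{ABR4}.
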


Proposition \ref{from ABRrecent} (ii) and Lemma \ref{continuityLB} imply

\begin{proposition}\label{continuityCt}
For every $t\in [0,1)$ the  operator $C_t\colon VH(\D)\to VH(\D)$ is continuous.
\end{proposition}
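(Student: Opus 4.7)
The plan is to apply Lemma \ref{continuityLB} directly to the inductive limit decomposition $VH(\D) = \mathrm{ind}_k H^\infty_{v_k}$, using Proposition \ref{from ABRrecent}(ii) on each step separately. Fix $t \in [0,1)$ and an arbitrary $k \in \N$. First I would check that $v_k$ is a weight in the sense of the introduction: it is continuous on $[0,1)$, strictly positive, non-increasing (as a positive power of the non-increasing logarithmic weight $v$), and tends to $0$ as $r \to 1^-$. Hence Proposition \ref{from ABRrecent}(ii) applies with $v$ replaced by $v_k$ and yields that $C_t \colon H^\infty_{v_k} \to H^\infty_{v_k}$ is continuous (indeed compact).

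Next, since the canonical embedding $H^\infty_{v_k} \hookrightarrow VH(\D)$ is continuous by the very definition of the inductive limit topology, composition shows that $C_t$ sends $H^\infty_{v_k}$ continuously into $H^\infty_{v_k} \subset VH(\D)$. In the language of Lemma \ref{continuityLB}, one can take $m = k$ for every $k$, with no genuine step up required. The lemma then immediately delivers the continuity of $C_t \colon VH(\D) \to VH(\D)$.

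There is no real obstacle in this argument; the work has already been done in Proposition \ref{from ABRrecent}(ii). The only thing worth highlighting is the sharp contrast with the case $t = 1$, i.e.\ $C_1 = \sC$: by Proposition \ref{continuity}, $\sC$ fails to act on any single step $H^\infty_{v_k}$ and one really needs $m = k+1$. For $t < 1$ the denominator $1 - t\zeta$ in the integrand \eqref{eq.formula-int} satisfies $|1 - t\zeta| \geq 1 - t > 0$ uniformly on $\D$, which is the underlying reason why $C_t$ preserves each step and reduces the proof to a one-line invocation of Lemma \ref{continuityLB}.
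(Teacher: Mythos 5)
Your argument is correct and is exactly the paper's proof: Proposition \ref{from ABRrecent}(ii) applied with $v=v_k$ gives $C_t\colon H^\infty_{v_k}\to H^\infty_{v_k}$ continuous, and Lemma \ref{continuityLB} (with $m=k$) then yields continuity on $VH(\D)$. The verification that each $v_k$ is a weight and the remark contrasting the case $t=1$ are fine but not needed beyond what the paper records.
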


The following abstract result is well known. We briefly indicate the proof for the sake of completeness.

\begin{lemma}\label{compact-LB}
Let $X=\ind_k X_k$ be a Hausdorff inductive limit of a sequence of Banach spaces $(X_k,\|\cdot\|_k)$ such that the inclusion $X_k \subset X_{k+1}$ is compact. An operator $T\in \cL(X)$ is compact if and only if there is $m$ such that for all $k \in \N$ we have $T(X_k) \subset X_m$.
	\end{lemma}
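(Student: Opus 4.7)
The plan is to prove both implications separately, using the closed graph theorem together with the universal property of inductive limits for one direction, and the regularity of (DFS) inductive limits (bounded sets lie in some step) for the other.

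For the sufficiency ($\Leftarrow$), suppose there is $m$ with $T(X_k) \subset X_m$ for every $k$. I would first promote each set-theoretic inclusion to a continuous factorization $T \colon X_k \to X_m$. Since $T \colon X \to X$ is continuous and $X$ is Hausdorff, the graph of $T$ viewed as a map $X_k \to X_m$ is closed: if $x_n \to x$ in $X_k$ and $Tx_n \to y$ in $X_m$, then both convergences pass to $X$ by continuity of the inclusions, so $Tx_n \to Tx$ by continuity of $T$ on $X$, and $Tx = y$. The closed graph theorem for Banach spaces then gives continuity of $T \colon X_k \to X_m$ for each $k$, and the universal property of the locally convex inductive limit yields continuity of $T \colon X \to X_m$. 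Writing $B_m$ for the open unit ball of $X_m$, the set $V := T^{-1}(B_m)$ is a neighborhood of $0$ in $X$ with $T(V) \subset B_m$, and the compactness of the inclusion $X_m \hookrightarrow X_{m+1}$ ensures that $B_m$ is relatively compact in $X_{m+1}$ and a fortiori in $X$. Hence $T$ is compact.

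For the necessity ($\Rightarrow$), suppose $T \in \cL(X)$ is compact and pick an absolutely convex neighborhood $U$ of $0$ in $X$ such that $T(U)$ is relatively compact, hence bounded, in $X$. The key structural input is that, because the linking maps are compact, $X$ is a (DFS)-space and therefore every bounded subset of $X$ is contained and bounded in some step $X_m$ (the same property already invoked in the introduction for $VH(\D)$). Applying this to $T(U)$ produces an $m$ with $T(U) \subset X_m$. Since $U$ is absorbing, each $x \in X$ admits $\lambda > 0$ with $\lambda x \in U$, whence $Tx = \lambda^{-1}T(\lambda x) \in X_m$. Therefore $T(X) \subset X_m$ and in particular $T(X_k) \subset X_m$ for every $k$.

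The only delicate point is the appeal to the regularity of the (DFS) inductive limit in the necessity direction; the rest is a routine combination of the closed graph theorem, Grothendieck-type factorization, and compactness of the linking inclusions. Alternatively, that regularity could be replaced by a direct argument using the fact that a basis of neighborhoods of $0$ in the inductive limit is given by absolutely convex hulls of unions of balls in the steps, but invoking the bounded-sets-in-a-step property is by far the cleanest route.
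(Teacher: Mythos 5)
Your proof is correct and follows essentially the same route as the paper's: the closed graph theorem on the steps together with compactness of the linking inclusion $X_m\subset X_{m+1}$ for the sufficiency, and regularity of the (DFS)-limit (every bounded set is contained and bounded in some step) applied to $T(U)$ for the necessity. The only cosmetic difference is that you produce the required $0$-neighbourhood as $T^{-1}(B_m)$ via the universal property of the inductive limit, whereas the paper constructs it explicitly as the absolutely convex hull of the scaled balls $\mu_k B_k$; these are the same argument in substance.
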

\begin{proof}
Assume first that $T$ is compact. There is a $0$-neighbourhood $U$ in $X$ such that $T(U)$ is relatively compact in $X$. Since all the inclusions $X_k \subset X_{k+1}$ are compact, the (LB)-space $X$ is a (DFS)-space, hence there is $m \in \N$ such that $T(U)$ is contained and relatively compact in $X_m$. This implies $T(X) \subset X_m$, and $T(X_k) \subset X_m$ for all $k \in \N$.

Now we suppose that $T\in \cL(X)$ satisfies $T(X_k) \subset X_m$ for all $k \in \N$. The continuity of $T: X \rightarrow X$ and of the inclusions $X_k \subset X$ for all $k$ imply that $T:X_k \rightarrow X_m$ has closed graph. By the closed graph theorem, $T:X_k \rightarrow X_m$ is continuous for each $k$. Let us denote by $B_k$ the closed unit ball of $X_k$. For each $k$ there is $\mu_k > 0$ such that $T(\mu_k B_k) \subset B_m$. Then, the absolutely convex hull $U$ of $ \bigcup_{k} \mu_k B_k$ is a $0$-neighbourhood in $X$ such that $T(U) \subset B_m$. Since the inclusion $X_m \subset X_{m+1}$ is compact, $T(U)$ is relatively compact in $X_{m+1}$, consequently in $X$, and $T:X \rightarrow X$ is indeed a compact operator.
\end{proof}

\begin{proposition}\label{compactCt}
For every $t\in [0,1)$ the  operator $C_t: VH(\D)\to VH(\D)$ is compact.
\end{proposition}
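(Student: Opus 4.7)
The plan is to apply Lemma \ref{compact-LB} directly: I will show that the single target index $m=1$ works, i.e.\ that $C_t(H^\infty_{v_k})\subset H^\infty_{v_1}$ for every $k\in\N$. Since $H^\infty(\D)\subset H^\infty_{v_1}$ continuously (because $v_1\le 1$), it is actually enough to prove the stronger statement that $C_t$ maps every step $H^\infty_{v_k}$ into $H^\infty(\D)$. The heuristic reason this is plausible (in sharp contrast to $\sC$) is that for $t\in[0,1)$ the factor $1/(1-t\zeta)$ is \emph{bounded} on $\overline{\D}$, so the only source of boundary blow-up is $f$ itself, and primitive-integration against a logarithmically-growing weight produces a bounded function.

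Concretely, I would first use the parametrisation $\zeta=sz$ to rewrite
$$
C_tf(z)=\int_0^1\frac{f(sz)}{1-tsz}\,ds,\qquad z\in\D,
$$
which also makes analyticity on all of $\D$ transparent. For $t\in[0,1)$ one has $|1-tsz|\ge 1-t>0$, so for $f\in H^\infty_{v_k}$
$$
|C_tf(z)|\le\frac{\|f\|_{v_k}}{1-t}\int_0^1\frac{ds}{v_k(s|z|)}.
$$
The key step is then the estimate that $\int_0^1 ds/v_k(s|z|)$ is bounded by a constant $M_k$ independent of $z$.

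To get this bound I split the interval of integration at $s_0=(1-1/e)/|z|$ (when $|z|>1-1/e$; otherwise the integrand is identically $1$). On $s\le s_0$ the weight $v_k$ equals $1$ and the contribution is at most $1$. On $s>s_0$ I substitute $u=1-s|z|$ to obtain
$$
\int_{s_0}^1(-\log(1-s|z|))^k\,ds=\frac{1}{|z|}\int_{1-|z|}^{1/e}(-\log u)^k\,du\le\frac{1}{|z|}\int_0^1(-\log u)^k\,du=\frac{k!}{|z|},
$$
which is bounded by $k!/(1-1/e)$ on the relevant range of $|z|$. Combining the two pieces gives an absolute constant $M_k$.

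Putting the estimates together yields $\|C_tf\|_\infty\le M_k\|f\|_{v_k}/(1-t)$, so $C_tf\in H^\infty(\D)\subset H^\infty_{v_1}$. Therefore $C_t(H^\infty_{v_k})\subset H^\infty_{v_1}$ for every $k\in\N$, and Lemma \ref{compact-LB} delivers compactness of $C_t\colon VH(\D)\to VH(\D)$. The only real point of care is the uniform-in-$z$ control of the integral $\int_0^1 ds/v_k(s|z|)$; once the $\Gamma$-function estimate $\int_0^1(-\log u)^k\,du=k!$ is in hand, the rest is just assembling pieces, and the separate continuity of each $C_t\colon H^\infty_{v_k}\to H^\infty_{v_k}$ from Proposition \ref{from ABRrecent}(ii) is not even needed in this argument (it is subsumed).
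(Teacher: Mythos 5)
Your proof is correct, and it follows the same overall strategy as the paper (Lemma \ref{compact-LB} with a single target step $m=1$, the representation $C_tf(z)=\int_0^1 f(sz)(1-tsz)^{-1}\,ds$ and the bound $|1-tsz|\ge 1-t$), but the key estimate is genuinely different. The paper does not estimate $\int_0^1 ds/v_k(s|z|)$ at all: it first drops from $H^\infty_{v_k}$ into the Korenblum space $H^\infty_{w_1}$ (using $w_1(r)=1-r\le C_k v_k$), and then a single $k$-independent computation, $\int_0^1 (1-s|z|)^{-1}ds=-\log(1-|z|)/|z|$, shows $C_t\colon H^\infty_{w_1}\to H^\infty_{v_1}$ is bounded, which suffices. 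Your direct route keeps the logarithmic weight and uses $\int_0^1(-\log u)^k\,du=k!$ (your splitting at $s_0=(1-1/e)/|z|$ and the substitution $u=1-s|z|$ are sound), at the price of a $k$-dependent constant; in exchange you obtain the strictly stronger conclusion that $C_t$ maps every step $H^\infty_{v_k}$, hence all of $VH(\D)$, into $H^\infty(\D)$, not merely into $H^\infty_{v_1}$. One small point of bookkeeping: Lemma \ref{compact-LB} is stated for $T\in\cL(X)$, so continuity of $C_t$ on $VH(\D)$ is still a hypothesis you must supply (the paper quotes Proposition \ref{continuityCt} for this); your claim that Proposition \ref{from ABRrecent}(ii) is subsumed is nevertheless legitimate, because your explicit norm bound $\|C_tf\|_\infty\le M_k\|f\|_{v_k}/(1-t)$ already makes $C_t\colon H^\infty_{v_k}\to H^\infty_{v_1}$ bounded for every $k$, and Lemma \ref{continuityLB} then gives $C_t\in\cL(VH(\D))$ without any appeal to \cite{ABR4}; it would be worth saying this explicitly.
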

\begin{proof}
We consider the standard weight $w_1(r):=1-r, \ 0 \leq r < 1$ and the weight $v_1(z)= 1$ if $|z| \leq 1-1/e$, and $v_1(z)= (-\log(1-r))^{-1}$ if $|z| \geq 1-1/e$ and we show that $C_t: H_{w_1}^\infty\to H_{v_1}^\infty$ is continuous. Indeed, given $f \in H_{w_1}^\infty, \ ||f||_{w_1}\leq 1 $, we have for $|z| \geq 1-1/e$
$$
|C_tf(z)| = \Big|\frac{1}{z}\int_0^z\frac{f(\zeta)}{1-t\zeta}\,d\zeta \Big| = \Big|\int_0^1\frac{f(sz)}{1-tsz}\,ds \Big| \leq \int_0^1\frac{|f(sz)|}{|1-tsz|}\,ds.
$$
Since $|f(sz)| \leq 1/(1-s|z|)$ and $|1-tsz| \geq 1-t$, we can continue to estimate as follows
$$
|C_tf(z)| \leq \frac{1}{1-t} \int_0^1\frac{1}{1-s|z|}\,ds = \frac{1}{1-t} \Big(-\frac{\log(1-|z|)}{|z|}\Big) \leq \frac{1}{(1-t)(1-1/e)} \frac{1}{v_1(z)}.
$$
Consequently,
$$
\sup_{|z| \geq 1-1/e} v_1(z) |C_tf(z)| \leq \frac{1}{(1-t)(1-1/e)}
$$
for each $f \in H_{w_1}^\infty, \ ||f||_{w_1}\leq 1 $, and this implies the continuity of $C_t: H_{w_1}^\infty\to H_{v_1}^\infty$.

Now, each Banach space $H_{v_k}^\infty$ is continuously included in $H_{w_1}^\infty$, hence in $H_{v_1}^\infty$. The compactness of
$C_t: VH(\D)\to VH(\D)$ now follows from Proposition \ref{continuityCt} and Lemma \ref{compact-LB}.
\end{proof}

\begin{proposition}\label{SpectrumCt}  For each $t\in [0,1)$ the operator $C_t: VH(\D)\to VH(\D)$ satisfies:
\begin{itemize}
\item[(i)] $\sigma_{pt}(C_t, VH(\D))=\left\{\frac{1}{m+1}\,: m\in\N_0\right\}$.

\item[(ii)] $\sigma(C_t, VH(\D))=\left\{\frac{1}{m+1}\,: m\in\N_0\right\}\cup\{0\}$.
\end{itemize}
\end{proposition}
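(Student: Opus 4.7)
The plan is to reduce the spectral computation to an eigenvalue problem by invoking the compactness of $C_t$ established in Proposition \ref{compactCt}, and then to read off the eigenvalues from the known action of $C_t$ on $H(\D)$, checking that each eigenvector actually lies in $VH(\D)$ thanks to $t<1$.

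For part (i), I would first compute $\sigma_{pt}(C_t;H(\D))$. Writing the eigenvalue equation $C_t f=\lambda f$ in the integrated form $\int_0^z f(\zeta)/(1-t\zeta)\,d\zeta=\lambda z f(z)$ and differentiating leads to a separable first-order linear ODE whose general solution is
$$
f(z)=K\, z^{1/\lambda-1}(1-tz)^{-1/\lambda},
$$
and analyticity of $f$ at the origin forces $1/\lambda-1\in\N_0$. Hence the eigenvalues are exactly $\{1/(m+1):m\in\N_0\}$, each with a one-dimensional eigenspace spanned by $f_m(z):=z^m(1-tz)^{-(m+1)}$. This, or the equivalent coefficient recursion, is already worked out in \cite{ABR4}, and it generalises the $t=1$ case from \cite{Pe} used in Theorem \ref{spectrumCesaro}. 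Because $t\in[0,1)$ we have $|1-tz|\geq 1-t>0$ on $\D$, so $|f_m(z)|\leq (1-t)^{-(m+1)}$ and therefore $f_m\in H^\infty(\D)\subset H^\infty_{v_1}\subset VH(\D)$; conversely, any eigenvector in $VH(\D)$ is automatically an eigenvector in $H(\D)$, so no other values of $\lambda$ can appear. This establishes (i).

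For part (ii), the compactness of $C_t$ on $VH(\D)$ (Proposition \ref{compactCt}) together with the spectral theorem for compact operators on locally convex spaces (\cite[Theorem 9.10.2]{Ed}, already invoked in Corollary \ref{noncompactCesaro}) gives
$$
\sigma(C_t;VH(\D))\setminus\{0\}=\sigma_{pt}(C_t;VH(\D))\setminus\{0\}=\{1/(m+1):m\in\N_0\}.
$$
Since this countable set accumulates at $0$ and the spectrum is closed, we conclude $0\in\sigma(C_t;VH(\D))$; equivalently, $C_t$ cannot be a topological isomorphism of the infinite-dimensional space $VH(\D)$, as its composition with an inverse in $\cL(VH(\D))$ would force the identity to be compact, which is impossible unless the space is finite dimensional.

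I do not expect a serious obstacle here: once compactness has been secured and the explicit eigenvectors are in hand and observed to be bounded for $t<1$, the abstract spectral theory of compact operators does essentially all the work. The only step requiring a touch of care is making sure the ODE derivation really captures every eigenvector (all branches $z^{1/\lambda-1}$ that are not analytic at the origin must be discarded), but this is a short verification.
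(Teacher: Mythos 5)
Your proposal is correct and follows essentially the same route as the paper: identify the point spectrum from the known eigenvalue problem on $H(\D)$, observe that for $t<1$ the eigenvectors are bounded on $\D$ (hence lie in $H^\infty(\D)\subset VH(\D)$), and then obtain (ii) from the compactness of $C_t$ (Proposition \ref{compactCt}) together with the spectral theory of compact operators in \cite[Theorem 9.10.2]{Ed}. The only cosmetic difference is that you derive the explicit eigenfunctions $z^m(1-tz)^{-(m+1)}$ by solving the ODE and bound them via $|1-tz|\ge 1-t$, whereas the paper simply cites \cite[Propositions 2.8 and 3.7]{ABR4} for the eigenvalues and the $\ell^1$ Taylor coefficients of the eigenvectors.
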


\begin{proof} Fix $t\in [0,1)$.

(i) The operator $C_t: H(\D)\to H(\D)$ is continuous and its point spectrum satisfies $\sigma_{pt}(C_t, H(\D))=\left\{\frac{1}{m+1}\,: m\in\N_0\right\}$ by \cite[Proposition 3.7]{ABR4}. Since $VH(\D) \subset H(\D)$ with continuous inclusion, it follows that
$\sigma_{pt}(C_t, VH(\D)) \subset \left\{\frac{1}{m+1}\,: n\in\N_0\right\}$.

On the other hand, for each $m\in\N_0$,  the corresponding eigenspace ${\rm ker}(\frac{1}{m+1}I-C_t)$ is 1-dimensional and is generated by an eigenvector $g_m(z):=\sum_{n=0}^\infty (x^{[m]})_nz^n$ with $(x_n^{[m]})_{n\in\N_0}\in \ell^1$. See the proofs of \cite[Propositions 2.8 and 3.7]{ABR4}. Then $g_m(z) \in H^\infty(\D) \subset VH(\D)$ and $\frac{1}{m+1} \in \sigma_{pt}(C_t, VH(\D))$ for each $m\in\N_0$.

(ii) This follows from part (i) and the fact that $C_t: VH(\D)\to VH(\D)$ is a compact operator. See \cite[Theorem 9.10.2]{Ed}.
\end{proof}

The next result is a particular case of \cite[Theorem 6.4]{ABR3}.

\begin{lemma}\label{powerbddLB}
Let $X$ be a complete (LB)-space. Let $T\in \cL(X)$ be a compact operator such that $1\in\sigma(T;X)$, $\sigma(T;X)\setminus\{1\}\su \ov{B(0,\delta)}$ for some $\delta\in (0,1)$  and satisfying ${\rm ker} (I-T)\cap {\rm Im}(I-T)(X)=\{0\}$. Then $T$ is power bounded and uniformly mean ergodic.
\end{lemma}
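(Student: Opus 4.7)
The plan is to perform a Riesz spectral decomposition that splits the eigenvalue $1$ from the rest of the spectrum, show that $T$ acts as the identity on the (finite-dimensional) spectral subspace at $1$, and show that $T^n$ tends to zero in $\cL_b$ on the complementary closed subspace. This simultaneously yields equicontinuity of $\{T^n\}$ and uniform convergence of the Ces\`aro averages.

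First I would set up the Riesz projection. Because $\sigma(T;X)\setminus\{1\}\subset \ov{B(0,\delta)}$ with $\delta<1$, the point $1$ is isolated in $\sigma(T;X)$. Choose a small positively oriented circle $\G$ around $1$ separating $\{1\}$ from the remainder of the spectrum, and put
$$
P := \frac{1}{2\pi i}\int_{\G}R(\lambda,T)\,d\lambda \in \cL(X).
$$
Then $P^2=P$ and $PT=TP$, so $X = M\oplus N$ is a topological direct sum into $T$-invariant closed subspaces $M:=PX$ and $N:=(I-P)X$, with $\sigma(T|_M)=\{1\}$ and $\sigma(T|_N)=\sigma(T;X)\setminus\{1\}\subset \ov{B(0,\delta)}$.

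Next I would use the compactness of $T$ and \cite[Theorem 9.10.2]{Ed} to conclude that the spectral subspace $M$ at the nonzero eigenvalue $1$ is finite-dimensional, hence a Banach space on which $(I-T)|_M$ is a nilpotent linear map. The hypothesis $\ker(I-T)\cap \IT\s\{0\}$ (combined with the algebraic fact that $M\s\ker((I-T)^k)$ for some $k$) forces the ascent at $1$ to equal one: otherwise some $x\in M$ would satisfy $y:=(I-T)x\neq 0$ with $(I-T)y=0$, producing a nonzero element $y$ of $\ker(I-T)\cap\mathrm{Im}(I-T)$. Thus $T|_M = I_M$, and consequently $T^n|_M=I_M$ and $(T_{[n]})|_M=I_M$ for every $n$.

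On $N$, the restriction $T|_N$ is a compact operator on the complete locally convex space $N$ whose spectrum lies in $\ov{B(0,\delta)}$. For any $\delta'\in(\delta,1)$, the Dunford functional calculus applied to $\lambda\mapsto (\lambda/\delta')^n$, together with the uniform boundedness of $R(\lambda,T|_N)$ on the circle $|\lambda|=\delta'$, yields equicontinuity of the family $\{(\delta')^{-n}T^n|_N : n\in \N_0\}$ in $\cL(N)$, so $T^n|_N\to 0$ in $\cL_b(N)$ geometrically. Combining with Step 2 gives $T^n=P+T^n|_N(I-P)\to P$ in $\cL_b(X)$; in particular $\{T^n\}$ is equicontinuous (power boundedness) and $T_{[n]}=P+\frac{1}{n}\sum_{k=1}^n T^k(I-P)\to P$ in $\cL_b(X)$, which is exactly uniform mean ergodicity. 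The main obstacle is the third step: transferring the spectral radius estimate from the Banach setting to the complete (LB)-space $N$, which is precisely the point where the completeness hypothesis and compactness of $T$ are essential.
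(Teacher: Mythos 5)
Your first two steps are essentially fine, although the Riesz projection need not (and in this setting should not, without further justification) be manufactured by a contour integral: for a compact operator on a complete locally convex space, Edwards' Riesz theory \cite[Theorems 9.10.1 and 9.10.2]{Ed} already provides the topological decomposition $X=M\oplus N$ with $M=\ker((I-T)^k)$ finite dimensional, $N$ closed and $T$-invariant and $1$ in the resolvent set of $T|_N$; your ascent-one argument from $\ker(I-T)\cap{\rm Im}(I-T)=\{0\}$ then correctly gives $T|_M=I_M$. (Defining $P=\frac{1}{2\pi i}\int_\G R(\lambda,T)\,d\lambda$ directly would require checking that $\{R(\lambda,T):\lambda\in\G\}$ is equicontinuous and that the $\cL(X)$-valued integral exists, which you do not address.) Note also that the paper itself offers no proof of this lemma: it is quoted as a particular case of \cite[Theorem 6.4]{ABR3}, so the comparison is with that reference rather than with an in-paper argument.

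The genuine gap is your third step, which you yourself flag as \lqq the main obstacle\rqq\ without closing it. In a non-normable locally convex space the inclusion $\sigma(T|_N;N)\su\ov{B(0,\delta)}$ with $\delta<1$ does not by itself yield $T^n|_N\to 0$ in $\cL_b(N)$, nor the equicontinuity of $\{(\delta')^{-n}T^n|_N: n\in\N_0\}$: the representation $T^n|_N=\frac{1}{2\pi i}\oint_{|\lambda|=\delta'}\lambda^n R(\lambda,T|_N)\,d\lambda$ and the \lqq uniform boundedness of $R(\lambda,T|_N)$ on the circle\rqq\ are Banach-space facts (Neumann series at infinity, spectral radius formula, Cauchy's theorem for operator-valued integrals) whose locally convex analogues are precisely what must be proved; in general the resolvent of an operator on a Fr\'echet or (LB)-space need not be equicontinuous on compact subsets of the resolvent set, and there are operators with spectrum $\{0\}$ that are not even power bounded. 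To close the gap one has to exploit compactness together with the structure of a complete (LB)-space: for instance, since complete (LB)-spaces are regular, $T(U)$ is bounded and hence contained and bounded in some Banach step $X_m$, so $T(X)\su X_m$ and $S:=T|_{X_m}\in\cL(X_m)$ by the closed graph theorem; one can then run the Banach Riesz/spectral-radius argument for $S$ and transfer the estimates back to $X$ through $T^{n}=J\,S^{n-1}\,\wt{T}$ (with $\wt{T}\colon X\to X_m$ the operator $T$ with restricted codomain and $J$ the inclusion), after relating $\sigma(S;X_m)$ to $\sigma(T;X)$. Some reduction of this kind is the substance of \cite[Theorem 6.4]{ABR3}, and without it your argument is incomplete at its decisive point. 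A minor additional remark: once $T^n\to P$ in $\cL_b(X)$ is known, the equicontinuity of $\{T^n\}$ should be justified via barrelledness of $X$ and Banach--Steinhaus, which you use implicitly.
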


\begin{proposition}\label{meanergodicCt}
For every $t\in [0,1)$ the  operator $C_t: VH(\D)\rightarrow VH(\D)$ is  power bounded, uniformly mean ergodic, but it is not supercyclic.
\end{proposition}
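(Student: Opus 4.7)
The plan is to deduce power boundedness and uniform mean ergodicity from Lemma \ref{powerbddLB} applied to $T = C_t$ and $X = VH(\D)$, and then to derive the failure of supercyclicity by the comparison principle, in the spirit of Proposition \ref{meanergodicCesaro}.

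Three hypotheses of Lemma \ref{powerbddLB} must be checked. Compactness of $C_t \colon VH(\D) \to VH(\D)$ is Proposition \ref{compactCt}. By Proposition \ref{SpectrumCt}(ii), $1 = 1/(0+1) \in \sigma(C_t; VH(\D))$ and
$$
\sigma(C_t; VH(\D)) \setminus \{1\} = \{0\} \cup \{1/(m+1) : m \geq 1\} \subset \overline{B(0, 1/2)},
$$
so one may take $\delta = 1/2$. The remaining condition
$$
{\rm ker}(I - C_t) \cap {\rm Im}(I - C_t)(VH(\D)) = \{0\}
$$
is the only one requiring genuine work. By Proposition \ref{SpectrumCt}(i), ${\rm ker}(I - C_t)$ is the one-dimensional span of the eigenvector $g_0(z) = \sum_n (x^{[0]})_n z^n$. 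Reading the eigenvalue equation $C_t g_0 = g_0$ off the series expansion of $C_t$ coefficient by coefficient yields the recurrence $n (x^{[0]})_n = \sum_{k=0}^{n-1} t^{n-k} (x^{[0]})_k$ for $n \geq 1$, and a one-line induction gives $(x^{[0]})_n = t^n (x^{[0]})_0$ for every $n$; in particular $g_0$ is a non-zero scalar multiple of $1/(1-tz) \in H^\infty(\D)$, so $g_0(0) \neq 0$. On the other hand, the convention $C_t f(0) = f(0)$ built into the definition of $C_t$ forces $(I - C_t) f (0) = 0$ for every $f \in VH(\D)$, so ${\rm Im}(I - C_t)$ is contained in the hyperplane $\{f \in VH(\D) : f(0) = 0\}$. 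Any $c g_0$ in the intersection must then satisfy $c \cdot g_0(0) = 0$, whence $c = 0$.

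For the non-supercyclicity I would copy the argument used in Proposition \ref{meanergodicCesaro}. The inclusion $VH(\D) \hookrightarrow H(\D)$ is continuous (each step $H^\infty_{v_k}$ embeds continuously into $H(\D)$) and has dense range, since the polynomials belong to $H^\infty_{v_1} \subset VH(\D)$ and are dense in $H(\D)$; it also intertwines the two actions of $C_t$. By the comparison principle \cite[Subsection 1.1.1]{B-M} it therefore suffices to know that $C_t \colon H(\D) \to H(\D)$ is not supercyclic, which for $0 \leq t < 1$ is part of the dynamical analysis in \cite{ABR4}, along the lines of \cite[Proposition 3.5]{ABR_power} for $t = 1$. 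The only non-routine step in the whole plan is the verification of the intersection condition; once $g_0(0) \neq 0$ has been isolated, everything else is a matter of assembling previously proved results.
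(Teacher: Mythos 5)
Your proposal is correct and follows essentially the same route as the paper: Lemma \ref{powerbddLB} with $\delta=1/2$ via Propositions \ref{compactCt} and \ref{SpectrumCt}, the intersection condition ${\rm ker}(I-C_t)\cap{\rm Im}(I-C_t)=\{0\}$ via $g_0(0)\neq 0$ together with ${\rm Im}(I-C_t)\subset\{f\in VH(\D):f(0)=0\}$, and non-supercyclicity by the comparison principle from the corresponding result for $C_t$ on $H(\D)$ in \cite{ABR4}. The only cosmetic difference is that you verify $g_0(z)=a_0/(1-tz)$ directly from the coefficient recurrence and use only the inclusion of the image in the hyperplane, whereas the paper cites \cite{ABR4} for the eigenvector and additionally identifies ${\rm Im}(I-C_t)$ with the whole hyperplane via \cite[Theorem 9.10.1]{Ed}, neither of which is needed for your (slightly leaner) argument.
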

\begin{proof}
We check that all the assumptions of Lemma \ref{powerbddLB} are satisfied to conclude that $C_t$ is power bounded and uniformly mean ergodic on $VH(\D)$.

It is easy to see that ${\rm ker} (I-C_t)={\rm span}\{g_0\}$, with $g_0(z)=\sum_{n=0}^\infty t^nz^n \in VH(\D)$, for $z\in\D$, see \cite{ABR4}. On the other hand, ${\rm Im}(I-C_t)$ is a closed subspace of $VH(\D)$ since $C_t$ is compact on $VH(\D)$ by Proposition \ref{compactCt},  and  ${\rm Im}(I-C_t)\subset \{g\in VH(\D)\,:\, g(0)=0\}$. Moreover,  \cite[Theorem 9.10.1]{Ed} implies that ${\rm codim}\,{\rm Im}(I-C_t)={\rm dim}\,{\rm ker} (I-C_t)=1$. Accordingly, both ${\rm Im}(I-C_t)$ and $\{g\in H^\infty_v\, :\, g(0)=0\}$ are hyperplanes of $VH(\D)$. Hence ${\rm Im}(I-C_t)=\{g\in H^\infty_v\, :\, g(0)=0\}$.

Now, if $h\in {\rm Im}(I-C_t)\cap{\rm ker}(I-C_t)$, then $h(0)=0$ and there exists $\lambda\in \C$ such that $h=\lambda g_0$. This yields that $0=h(0)=\lambda g_0(0)=\lambda$. Hence,  ${\rm Im}(I-C_t)\cap {\rm ker} (I-C_t)=\{0\}$.

Finally, Proposition \ref{SpectrumCt} implies that $1\in \sigma(C_t,VH(\D))=\{\frac{1}{m+1}\,;\, m\in\N_0\}\cup\{0\}$. Consequently, $\sigma(C_t,VH(\D))\setminus\{1\}\subset \overline{B(0,1/2)}$.

The  operator $C_t: H(\D)\rightarrow H(\D)$ is not supercyclic for each $t\in [0,1)$ by \cite[Theorem 3.8]{ABR4}. The comparison principle \cite[Subsection 1.1.1]{B-M} ensures that $C_t: VH(\D)\rightarrow VH(\D)$ is not supercyclic.
\end{proof}

\textbf{Acknowledgement.}
We thank the referee for the careful reading of the manuscript and for the suggestion to improve and clarify  the proof of Theorem \ref{spectrumCesaro}.

This research was partially supported by the Project PID2020-119457GB-100
funded by MCIN/AEI/10.13039/501100011033 and by ``ERDF A way of making Europe''.


\bigskip
\bibliographystyle{plain}

%
%
%


\noindent \textbf{Author's address:}%
\vspace{\baselineskip}%

Jos\'e Bonet: Instituto Universitario de Matem\'{a}tica Pura y Aplicada IUMPA,
Universitat Polit\`{e}cnica de Val\`{e}ncia,  E-46071 Valencia, Spain

email: jbonet@mat.upv.es \\

\end{document}